\PassOptionsToPackage{dvipsnames}{xcolor}
\documentclass[VANCOUVER,STIX1COL]{WileyNJD-v2}
\usepackage{moreverb}
\usepackage{amsfonts,amssymb}
\usepackage{graphicx}
\graphicspath{{./Figures/}}
\usepackage{color}

\usepackage{ulem}
\usepackage{multirow}
\usepackage{multicol}
\usepackage[showonlyrefs]{mathtools}
\usepackage{subcaption}

\usepackage{url}
\normalem
\DeclareMathAlphabet{\mathbf}{OT1}{cmr}{bx}{it}

\newcommand{\vb}{{\mathbf b}}

\newcommand{\ve}{{\mathbf e}}

\newcommand{\vf}{{\mathbf f}}

\newcommand{\vu}{{\mathbf u}}
\newcommand{\vutilde}{{\mathbf {\widetilde{u}}}}
\newcommand{\vv}{{\mathbf v}}
\newcommand{\vvhat}{{\mathbf {\hat v}}}
\newcommand{\vwhat}{{\mathbf {\hat w}}}

\newcommand{\vw}{{\mathbf w}}
\newcommand{\vx}{{\mathbf x}}
\newcommand{\vxtilde}{{\mathbf {\widetilde{x}}}}

\newcommand{\vy}{{\mathbf y}}
\newcommand{\vz}{{\mathbf z}}
\newcommand{\vnull}{\boldsymbol{0}}

\newcommand{\spK}{{\cal K}}
\newcommand{\spQ}{{\cal Q}}

\renewcommand{\d}{\,\mathrm{d}}
\newcommand{\dmu}{\d\mu(t)}

\newcommand{\N}{\mathbb{N}}
\newcommand{\R}{\mathbb{R}}

\newcommand{\Rnn}{\mathbb{R}^{n \times n}}
\newcommand{\C}{\mathbb{C}}
\newcommand{\bbE}{\mathbb{E}}
\newcommand{\Cn}{\mathbb{C}^n}
\newcommand{\Cnn}{\mathbb{C}^{n \times n}}

\newcommand{\lmax}{\lambda_{\textnormal{max}}}
\newcommand{\lmin}{\lambda_{\textnormal{min}}}

\newcommand{\That}{\widehat{T}}
\newcommand{\Lhat}{\widehat{L}}
\newcommand{\Vhat}{\widehat{V}}
\newcommand{\What}{\widehat{W}}
\newcommand{\Tcal}{\mathcal{T}}

\newcommand{\tildeV}{\widetilde{V}_m}
\newcommand{\tildeT}{\widetilde{T}_m}
\newcommand{\Vcal}{\mathcal{V}}

\newcommand{\Lan}{\textnormal{Lan}}
\newcommand{\Arn}{\textnormal{Arn}}

\newcommand{\Rat}{\textnormal{Rat}}
\newtheorem{example}[theorem]{Example}
\let\oldexample\example
\renewcommand{\example}{\oldexample\normalfont}

\DeclareMathOperator{\Span}{span}
\DeclareMathOperator{\spec}{spec}

\def\Fd{Fr\'{e}chet derivative}

\usepackage{pgfplots}
\usepgfplotslibrary{groupplots}
\usetikzlibrary{external}

\pgfkeys{/pgf/images/include external/.code={\includegraphics{#1}}}

\newenvironment{customlegend}[1][]{%
    \begingroup
    \csname pgfplots@init@cleared@structures\endcsname
    \pgfplotsset{#1}%
}{%
    \csname pgfplots@createlegend\endcsname
    \endgroup
}%
\def\addlegendimage{\csname pgfplots@addlegendimage\endcsname}

\definecolor{color_peter1}{RGB}{0,74,255}
\definecolor{color_peter2}{RGB}{0,200,0}
\definecolor{color_peter3}{RGB}{255,147,49}

\newcommand\BibTeX{{\rmfamily B\kern-.05em \textsc{i\kern-.025em b}\kern-.08em
T\kern-.1667em\lower.7ex\hbox{E}\kern-.125emX}}

\articletype{RESEARCH ARTICLE}%

\received{<day> <Month>, <year>}
\revised{<day> <Month>, <year>}
\accepted{<day> <Month>, <year>}


\begin{document}

\title{Computing low-rank approximations of the Fr\'echet derivative of a matrix function using Krylov subspace methods}

\author[1]{Peter Kandolf}

\author[2]{Antti Koskela}

\author[3]{Samuel D. Relton}

\author[4]{Marcel Schweitzer*}

\authormark{KANDOLF \textsc{et al}}

\address[1]{\orgdiv{Institut f\"ur Mathematik}, \orgname{Universit\"at Innsbruck}, \orgaddress{\state{Technikerstr.\ 13, 6020 Innsbruck}, \country{Austria}}}

\address[2]{\orgdiv{Department of Computer Science}, \orgname{University of Helsinki}, \orgaddress{\state{P.O.Box 68, FIN-00014}, \country{Finland}}}

\address[3]{\orgdiv{Leeds Institute of Health Sciences}, \orgname{The University of Leeds}, \orgaddress{\state{Leeds, LS2 9LU}, \country{UK}}}

\address[4]{\orgdiv{Mathematisch-Naturwissenschaftliche Fakult\"at}, \orgname{Heinrich-Heine-Universit\"at D\"usseldorf}, \orgaddress{\state{Universit\"atsstra\ss{}e 1, 40225 D\"usseldorf}, \country{Germany}}. The work of Marcel Schweitzer was partly supported by the SNSF research project \emph{Low-rank updates of matrix functions and fast eigenvalue solvers}.}

\corres{*Marcel Schweitzer, Heinrich-Heine-Universit\"at D\"usseldorf, Universit\"atsstra\ss{}e 1, 40225 D\"usseldorf, Germany. \email{marcel.schweitzer@hhu.de}}


\jnlcitation{\cname{%
\author{P. Kandolf}, 
\author{A. Koskela}, 
\author{S.D. Relton}, 
and 
\author{M. Schweitzer}} (\cyear{2020}), 
\ctitle{Computing low-rank approximations of the Fr\'echet derivative of a matrix function using Krylov subspace methods}, \cvol{2020}}

\abstract[Abstract]{The \Fd\ $L_f(A,E)$ of the matrix function $f(A)$
  plays an important role in many different applications, including
  condition number estimation and network analysis. We present several
  different Krylov subspace methods for computing low-rank
  approximations of $L_f(A,E)$ when the direction term $E$ is of rank
  one (which can easily be extended to general low rank). We analyze
  the convergence of the resulting method for the important special
  case that $A$ is Hermitian and $f$ is either the exponential, the
  logarithm or a Stieltjes function. In a number of numerical tests,
  both including matrices from benchmark collections and from
  real-world applications, we demonstrate and compare the accuracy and
  efficiency of the proposed methods.}

\keywords{matrix function, \Fd, Krylov subspace, matrix exponential, Stieltjes function}

\maketitle

\section{Introduction}\label{sec:introduction}
Matrix functions
$f \colon \mathbb{C}^{n \times n} \rightarrow \mathbb{C}^{n \times n}$
are an increasingly important part of applied mathematics with a
wide variety of applications.
The matrix exponential, $f(A) = e^A$,
arises in
network analysis~\cite{EstradaHigham2010} and
exponential integrators~\cite{HochbruckLubich1997,HochbruckOstermann2010,
  HochbruckLubichSelhofer1998};
whilst the matrix logarithm, $f(A) = \log(A)$,
occurs in models of bladder carcinoma~\cite{gsrp14}
and when computing the matrix geometric mean~\cite{jvv12}.

Also of importance is the \Fd\ of a matrix function,
defined as the unique operator
$L_f(A, \cdot)\colon \mathbb{C}^{n\times n} \rightarrow \mathbb{C}^{n\times n}$
that is linear in its second argument and,
for any matrix $E \in \mathbb{C}^{n \times n}$,
satisfies
\begin{equation}\nonumber
  \label{eq.FD_defn}
  f(A + E) - f(A) = L_f(A,E) + o(\|{E}\|),
\end{equation}
where $\|\cdot\|$ denotes the matrix two-norm and $o(\|E\|)$ represents a remainder term that,
when divided by $\|E\|$, tends to zero as
$\|E\| \rightarrow 0$.
For small-scale matrices and analytic functions $f$,
a simple way to compute the \Fd\ is via the relation
(see \cite[Thm.~2.1]{math96})
\begin{equation}\label{eq:2x2block}
f\left(\left[\begin{array}{cc}A & E \\ 0 & A\end{array}\right]\right) = \left[\begin{array}{cc} f(A) & L_f(A,E) \\ 0 & f(A)\end{array}\right].
\end{equation}
As this formula requires the evaluation of a function of a
$2n \times 2n$ matrix
(which will typically result in a dense matrix),
it is not feasible for large, sparse matrices.

The \Fd\ is primarily used to calculate the relative condition number
of computing $f(A)$ via the formula~\cite[Chap.~3]{Higham2008}.
\begin{equation}\nonumber
  \label{eq.FD_cn_defn}
  \mathrm{cond}(f, A) =
  \lim_{\epsilon \rightarrow 0} \sup_{\|E\| \le \epsilon\|A\|}
  \frac{\|L_f(A,E)\|}{\|f(A)\|}.
\end{equation}
However,
in recent years the \Fd\ has also been required in applications including
nuclear activation~\cite{amrh15}, complex network analysis~\cite{eshh09}, decomposition of tensor grids~\cite{IannazzoJeurisPompili2019} and when solving optimization problems involving matrix functions~\cite{ThanouDongKressnerFrossard2017}.

Recently Kandolf and Relton~\cite{KandolfRelton2016} proposed
a block Krylov method to form approximations of
$L_f(A,E)\vb$ where $E = \eta\vy\vz^H$ is of rank one.
They found that even for large matrices $A$ and $E$ the
\Fd\ multiplied by a vector could be computed accurately
within very few iterations. The primary goal of this work is to extend their work
to approximate the entire matrix $L_f(A,E)$ with a
low-rank representation:
this allows us to compute not only $L_f(A,E)\vb$ for multiple
vectors $\vb$,
but to also speed up the applications mentioned above.

The remainder of this work is presented as follows.  In
section~\ref{sec:frechet_approx}, we first present a general framework
for computing low-rank updates of the \Fd\ for
functions which are either represented via the Cauchy integral formula
or belong to the class of Stieltjes functions. We then give details
for various specific methods arising from this framework, depending on
the properties of $A$ and on the subspaces used. The convergence of
the resulting methods is analyzed in section~\ref{sec:convergence} for
the case that $A$ is Hermitian positive or negative
definite. Section~\ref{sec:logarithm} deals with applying the
discussed techniques to the matrix logarithm, which does not fit into
this framework. The computation of a posteriori error estimates that 
can be used as stopping criteria is covered in Section~\ref{sec:posteriori}.
In section~\ref{sec:experiments} we
perform a battery of numerical experiments to test the accuracy and
performance of our new algorithms on problems taken from benchmark
collections and real-world applications. Finally we present some
conclusions and ideas for future work in
section~\ref{sec:conclusions}.

\section{Approximating the Fr\'echet derivative}\label{sec:frechet_approx}
In this section, we show how Krylov subspace methods can be used for
constructing low-rank approximations of the \Fd\
$L_f(A,E)$ of a matrix function. We first introduce a general
framework for this without going into algorithmic details and
afterwards discuss various possible choices of specific Krylov
methods.

In the following, we assume that the direction matrix $E$ is
of rank one, i.e., $E = \eta \vy\vz^H$, where $\eta \in \C$ and
$\vy,\vz \in \Cn$. Our approach can be extended to direction matrices
of higher rank either by using the linearity of the \Fd\ with respect to $E$ (i.e., by separately applying the
method several times to rank 1 direction terms), or by using block Krylov subspace methods.

One of the main tools we use---both for the derivation of algorithms and for their convergence analysis---is an integral representation of the \Fd, which can be derived in cases where the function $f$ itself admits an integral representation involving a resolvent.

In the following, we therefore focus on two classes of functions which arise frequently in applications. The first class consists of \emph{analytic functions represented via the Cauchy integral formula}, i.e.,
\begin{equation}\label{eq:cauchy_integral}
f(A) = \frac{1}{2\pi i} \int_\Gamma f(t)(tI-A)^{-1}\d t
\end{equation}
where $\Gamma$ is a path in the complex plane that winds around
$\spec(A)$, the spectrum of $A$, exactly once. The most prominent and widely used function belonging to this class is the matrix exponential
$f(A) = \exp(A)$.
The second class we consider is the class of \emph{Stieltjes functions},
which are defined by the integral transform
\begin{equation}\label{eq:stieltjes_integral}
f(A) = \int_0^\infty (A+tI)^{-1}\d\mu(t)
\end{equation}
where $\mu$ is a nonnegative, monotonically increasing function satisfying
\begin{equation}\label{eq:stieltjes_condition}\nonumber
\int_0^\infty \frac{1}{1+t} \dmu < \infty.
\end{equation}
and we assume $\spec(A)$ $\cap$ $\R_0^- = \emptyset$. Examples of practically relevant functions belonging to this class are the inverse fractional powers $f(A) = A^{-\sigma}$, $\sigma \in (0,1)$ represented as
$$A^{-\sigma} = \int_0^\infty t^{-\sigma}(A+tI)^{-1} \d t,$$
which occur, e.g., in the solution of fractional differential equations~\cite{BurrageHaleKay2012}, in lattice quantum chromodynamics~\cite{BlochEtAl2007,VanDenEshofFrommerLippertSchillingVanDerVorst2002} or in statistical sampling~\cite{IlicTurnerPettitt2004}.

To avoid unnecessary repetition, we derive the Krylov subspace approximation for the \Fd\ only for functions of the form~\eqref{eq:cauchy_integral}, and mention that the case~\eqref{eq:stieltjes_integral} can be handled analogously, with obvious modifications.

Differentiating~\eqref{eq:cauchy_integral} using the chain rule, one finds the representation
\begin{equation}\label{eq:frechet_derivative_integral}
L_f(A,\eta\vy\vz^H) = \frac{\eta}{2\pi i} \int_\Gamma f(t)(t I - A)^{-1}\vy\vz^H(tI - A)^{-1}\d t
\end{equation}
for the \Fd\ (see~\cite{Higham2008,KandolfRelton2016}). Using the short-hand notations
\begin{equation}\label{eq:shifted_systems_solutions}
\vx(t) = (tI-A)^{-1}\vy \quad \text{and} \quad \vu(t) = (tI-A)^{-H}\vz
\end{equation}
for the solutions of the (shifted) linear systems in the integrand
of~\eqref{eq:frechet_derivative_integral},
we can write this compactly as
\begin{equation}\label{eq:frechet_derivative_integral_compact}
L_f(A,\eta\vy\vz^H) = \frac{\eta}{2\pi i} \int_\Gamma f(t)\vx(t)\vu(t)^H \d t.
\end{equation}
An approximation for $L_f(A,\eta\vy\vz^H)$ can now be found by replacing the exact solutions $\vx(t),\vu(t)$ of the shifted linear systems by approximate solutions
\begin{equation}\label{eq:shifted_systems_solutions_approx}
\vxtilde(t) \approx (tI-A)^{-1}\vy \quad \text{and} \quad \vutilde(t) \approx (tI-A)^{-H}\vz.
\end{equation}
There are several important things to consider when choosing the specific approximations to use in~\eqref{eq:shifted_systems_solutions_approx}: it should be possible to easily evaluate the integral
\begin{equation}\label{eq:frechet_derivative_integral_compact_approx}
\widetilde{L} = \frac{\eta}{2\pi i} \int_\Gamma f(t)\vxtilde(t)\vutilde(t)^H \d t
\end{equation}
without needing to choose a contour $\Gamma$ and use a numerical
quadrature rule,
and the resulting matrix $\widetilde{L}$ should be of low rank, as it will in general be a full matrix which is impossible to store
explicitly for larger values of $n$. Approximations chosen from
Krylov subspaces are natural candidates for the
approximations~\eqref{eq:shifted_systems_solutions_approx} as Krylov
subspace methods are among the most widely used methods for solving
shifted linear systems; additionally, it is well-known that the same
Krylov subspace $\spK_m(A,\vy)$ can be used for efficiently
approximating $(tI-A)^{-1}\vy$ for all values of $t$
(see, e.g.,~\cite{FrommerMaass1999,Simoncini2003}).

In the following, we discuss various choices of Krylov subspace approximations for~\eqref{eq:shifted_systems_solutions_approx} and their computational and theoretical implications.

\subsection{Lanczos approximation for Hermitian $A$ and $E$}\label{subsec:lanczos_hermitianE}\label{subsec:lanczos}
When $A$ is Hermitian and $E = \eta \vy\vy^H$, $\eta \in \R$, the two families of linear systems in~\eqref{eq:shifted_systems_solutions} coincide, i.e., $\vx(t)=\vu(t)$, and it is reasonable to choose $\vxtilde(t)$ as \emph{Lanczos approximations}. First, an orthonormal basis $V_m = [\vv_1, \dots, \vv_m]$ of the Krylov subspace
\begin{equation}\label{eq:krylov_subspace}\nonumber
\spK_m(A,\vy) := \Span\{\vy,A\vy,A^2\vy,\dots,A^{m-1}\vy\}
\end{equation}
is computed via the short-recurrence Lanczos method~\cite{Lanczos1950}, collecting the orthonormalization coefficients in a tridiagonal, Hermitian matrix $T_m$.
The matrices $V_m$ and $T_m$ satisfy the \emph{Lanczos relation}
\begin{equation}\label{eq:lanczos_relation}
AV_m = V_mT_m + t_{m+1,m} \vv_{m+1}\ve_m^H,
\end{equation}
where $\ve_m$ denotes the $m$th canonical unit vector. Given these quantities, the Lanczos approximation is given as
\begin{equation}\label{eq:lanczos_approx_linearsystem}
\vx_m^\Lan(t) := \|\vy\|V_m (tI-T_m)^{-1}\ve_1.
\end{equation}
Substituting~\eqref{eq:lanczos_approx_linearsystem} into~\eqref{eq:frechet_derivative_integral_compact_approx} in place of $\vxtilde(t)$ gives the $m$th Lanczos approximation for the \Fd,
\begin{eqnarray}
L_m^\Lan &:=& \frac{\eta}{2\pi i} \int_\Gamma f(t) \|\vy\|^2 V_m (tI-T_m)^{-1}\ve_1\ve_1^H(tI-T_m)^{-1}V_m^H\d t \nonumber \\
&=& V_m \frac{\eta}{2\pi i} \int_\Gamma f(t) \|\vy\|^2(tI-T_m)^{-1}\ve_1\ve_1^H(tI-T_m)^{-1}\d t V_m^H\nonumber \\
&=& V_m L_f(T_m,\eta \|\vy\|^2 \ve_1\ve_1^H) V_m^H\label{eq:lanczos_approximation_frechet}.
\end{eqnarray}
Thus, computing the approximation~\eqref{eq:lanczos_approximation_frechet} amounts to computing the \Fd\ of the compressed matrix $T_m$ with respect to the direction term $\eta\|\vy\|^2\ve_1\ve_1^H$. As typically $m \ll n$, this can be done by standard methods for the \Fd\ of a small, dense matrix. In addition, it is directly obvious from the representation~\eqref{eq:lanczos_approximation_frechet} that the Lanczos approximation $L_m^\Lan$ is of rank at most $m$. We summarize the outlined approach in Algorithm~\ref{alg:lanczos_frechet}.

\begin{algorithm}
\caption{\label{alg:lanczos_frechet}Lanczos approximation of $L_f(A,\eta\vy\vy^H)$ for Hermitian $A$.}
\begin{algorithmic}[1]
\State \textbf{Input:} $m \in \N$, $A \in \Cnn$ Hermitian, $\eta \in \R$, $\vy \in \Cn$, function $f$
\State \textbf{Output:} Rank $m$ approximation $L_m^\Lan = \eta V_mX_mV_m^H \approx L_f(A,\eta\vy\vy^H)$
\State compute $V_m, T_m$ via $m$ Lanczos steps for $A$ and $\vy$\label{line:calltolanczos}
\State compute $X_m \leftarrow L_f(T_m,\eta\|\vy\|^2 \ve_1\ve_1^H)$
\If{desired}
\State form $L_m^\Lan \leftarrow \eta V_mX_mV_m^H$
\Else
\State return low-rank factors $V_m,X_m$
\EndIf
\end{algorithmic}
\end{algorithm}

\begin{remark}
It is often not necessary to form the approximation $L_m^\Lan$ explicitly, e.g., when only matrix-vector products with it need to be performed. In that case, storing the low-rank factors $V_m$ and $X_m$ requires memory of $\mathcal{O}(mn+m^2)$. If $m \ll n$ (as it will typically be the case in practice), this is significantly lower than $\mathcal{O}(n^2)$ needed for storing the full matrix $L_m^\Lan$.

Matrix-vector products with $L_m^\Lan$ can then efficiently be computed as
\begin{equation}\label{eq:low_rank_mult}
L_f(A,\eta\vy\vz^H)\vb \approx V_m(X_m(V_m^H(\eta \vb)))
\end{equation}
with computational complexity $\mathcal{O}(mn+m^2)$.

Let us brief\/ly compare this to the Krylov algorithm
from~\cite{KandolfRelton2016} for approximating $L_f(A,E)\vb$. In this
approach, the vector $\vb$ is part of the Krylov iteration so that the
method needs to be run again if a matrix-vector product with a vector
different from $\vb$ needs to be approximated.
In contrast,
by computing $V_m,X_m$ \emph{only once} with
Algorithm~\ref{alg:lanczos_frechet}
and then using~\eqref{eq:low_rank_mult},
we can efficiently approximate the action of $L_f(A,\eta\vy\vy^H)$ on any number of vectors.\hfill$\diamond$
\end{remark}

\begin{remark}
There are other possible motivations for arriving at the
approximation~\eqref{eq:lanczos_approximation_frechet}. One way 
is to consider the projection of the original problem of computing
$L_f(A,\eta\vy\vy^H)$ onto the tensorized Krylov subspace
$\spK_m(A,\vy) \otimes \spK_m(A,\vy)$, i.e.,
$$L_m^\Lan := V_m(L_f(V_m^HAV_m, V_m\eta\vy\vy^HV_m^H)V_m^H$$
which coincides with~\eqref{eq:lanczos_approximation_frechet} as
$V_m^HAV_m = T_m$ and $V_m\vy = \|\vy\|\ve_1$. This shows that our
approach is closely related to projection techniques for matrix
equations~\cite{Simoncini2016} or low-rank updates of matrix
functions~\cite{BeckermannKressnerSchweitzer2018}. We chose the above
approach based on the integral representation as this leads to a more
natural generalization to the non-Hermitian case, which will be
covered in the next subsection.

Another way of arriving at this approximation---which also handles the non-Hermitian case---is based on a general Krylov framework for bivariate matrix functions introduced by Kressner in~\cite{Kressner2019}. In particular, Algorithm~2 in~\cite[Section 5]{Kressner2019}, which was discovered independently from this work, coincides with our Algorithm~\ref{alg:arnoldi_frechet} discussed below.\hfill$\diamond$
\end{remark}

\subsection{Arnoldi approximation for the non-Hermitian case}\label{subsec:arnoldi}
In the non-Hermitian case $A \neq A^H$, no short-recurrence method for generating the Krylov basis vectors
$\vv_1, \dots,\vv_m$ exists in general. Instead, one can use the
\emph{Arnoldi method} which explicitly orthogonalizes $\vv_i$ against
all previous basis vectors $\vv_1,\dots,\vv_{i-1}$. In contrast to the
Hermitian case, the two linear
systems~\eqref{eq:shifted_systems_solutions} do not coincide, so that
two Krylov subspaces $\spK_m(A,\vy)$ and
$\spK_m(A^H\!,\vz)$ have to be built. Note that it is possible
to use different numbers $m_1 \neq m_2$ of steps for the two Krylov
subspaces, but for ease of presentation we always assume
$m_1 = m_2 = m$. Denoting the bases of $\spK_m(A,\vy)$ and
$\spK_m(A^H\!,\vz)$ by $V_m$ and $W_m$, respectively, and collecting
the corresponding Arnoldi orthonormalization coefficients in two upper
Hessenberg matrices $G_m$ and $H_m$, we obtain the \emph{Arnoldi
  relations}
\begin{eqnarray}
AV_m &=& V_mG_m + g_{m+1,m} \vv_{m+1}\ve_m^H \label{eq:arnoldi_relation1}\\
A^HW_m &=& W_mH_m + h_{m+1,m} \vw_{m+1}\ve_m^H \label{eq:arnoldi_relation2}.
\end{eqnarray}
The corresponding Arnoldi approximations for~\eqref{eq:shifted_systems_solutions} are then---analogously to~\eqref{eq:lanczos_approx_linearsystem}---given by
\begin{equation}\label{eq:arnoldi_approx_linearsystem}
\vx_m^\Arn(t) = \|\vy\|V_m (tI-G_m)^{-1}\ve_1 \quad \text{and} \quad\vu_m^\Arn(t) = \|\vz\|W_m (tI-H_m)^{-1}\ve_1.
\end{equation}
Plugging the approximations~\eqref{eq:arnoldi_approx_linearsystem} into~\eqref{eq:frechet_derivative_integral_compact_approx} gives the $m$th Arnoldi approximation for $L_f(A,\eta\vy\vz^H)$,
\begin{equation}\label{eq:arnoldi_approximation_frechet}
L_m^\Arn := V_m \frac{\eta}{2\pi i} \int_\Gamma f(t) \|\vy\|\|\vz\| (tI-G_m)^{-1}\ve_1\ve_1^H(tI-H_m^H)^{-1}\d t W_m^H.
\end{equation}
Here---in contrast to the Hermitian case---the obtained approximation
is not defined as the \Fd\ of a matrix of size $m
\times m$. Therefore it is at first sight not completely clear how to
evaluate~\eqref{eq:arnoldi_approximation_frechet} in an efficient
manner.
The following result, which was independently from this work also proven by Kressner in~\cite[Lemma~4]{Kressner2019}, allows us to evaluate the integral in~\eqref{eq:arnoldi_approximation_frechet} by computing a function of a $2m \times 2m$ block matrix; see also~\cite[Lemma 2.2]{BeckermannKressnerSchweitzer2018} for a similar result in the context of low-rank updates of matrix functions.

\begin{lemma}\label{lem:arnoldi_approx_block}
Let $f$ be of the form~\eqref{eq:cauchy_integral} and let $G_m, H_m$
from~\eqref{eq:arnoldi_relation1}--\eqref{eq:arnoldi_relation2} be such that $f(G_m), f(H_m^H)$ are defined. Let
\begin{equation}\label{eq:B}
B := \left[\begin{array}{cc}G_m & \eta\|\vy\|\|\vz\| \ve_1\ve_1^H\\ 0 & H_m^H\end{array}\right].
\end{equation}
Then
\begin{equation}\label{eq:fB}
f(B) = \left[\begin{array}{cc}f(G_m) & X_m \\ 0 & f(H_m^H)\end{array}\right] \text{ with } X_m = \frac{\eta}{2\pi i}\int_\Gamma f(t) \|\vy\|\|\vz\| (tI-G_m)^{-1}\ve_1\ve_1^H (tI-H_m^H)^{-1}\d t.
\end{equation}
\end{lemma}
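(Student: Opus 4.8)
The plan is to use the Cauchy integral representation~\eqref{eq:cauchy_integral} of $f$ together with the block upper triangular structure of $B$ in~\eqref{eq:B}. First I would observe that $B$ is block upper triangular, so $\spec(B) = \spec(G_m) \cup \spec(H_m^H)$, and I would fix a contour $\Gamma$ that winds exactly once around $\spec(B)$ and lies in the region where $f$ is analytic. Such a $\Gamma$ exists because the hypothesis that $f(G_m)$ and $f(H_m^H)$ are defined means that $f$ is analytic on a neighborhood of each of the two spectra; one may then take $\Gamma$ to be a homologically equivalent modification of the union of contours enclosing $\spec(G_m)$ and $\spec(H_m^H)$. With this choice, $f(B) = \frac{1}{2\pi i}\int_\Gamma f(t)(tI - B)^{-1}\d t$, and the same $\Gamma$ may simultaneously be used in the Cauchy representations of $f(G_m)$ and $f(H_m^H)$.

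The second step is to compute the resolvent of $B$ explicitly. Since
\[ tI - B = \left[\begin{array}{cc} tI - G_m & -\eta\|\vy\|\|\vz\|\,\ve_1\ve_1^H \\ 0 & tI - H_m^H \end{array}\right] \]
is block upper triangular and, for $t \in \Gamma$, invertible (neither diagonal block is singular on $\Gamma$ since $\Gamma$ avoids $\spec(B)$), the standard formula for the inverse of a block triangular matrix yields
\[ (tI - B)^{-1} = \left[\begin{array}{cc} (tI-G_m)^{-1} & \eta\|\vy\|\|\vz\|\,(tI-G_m)^{-1}\ve_1\ve_1^H(tI-H_m^H)^{-1} \\ 0 & (tI-H_m^H)^{-1} \end{array}\right]. \]

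The third step is to insert this into $f(B) = \frac{1}{2\pi i}\int_\Gamma f(t)(tI-B)^{-1}\d t$ and integrate blockwise, which is justified by linearity of the integral. The $(1,1)$ and $(2,2)$ blocks reproduce $f(G_m)$ and $f(H_m^H)$ by~\eqref{eq:cauchy_integral} applied to $G_m$ and $H_m^H$, the $(2,1)$ block is zero, and the $(1,2)$ block equals precisely the integral defining $X_m$ in~\eqref{eq:fB}. This gives the claimed block form of $f(B)$.

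The main obstacle I expect is the contour bookkeeping in the first step: one has to produce a single curve $\Gamma$ that simultaneously encircles $\spec(G_m)$ and $\spec(H_m^H)$ with winding number one, stays inside the common domain of analyticity of $f$, and is admissible in all three Cauchy representations at once (the two spectra may overlap, so one cannot simply take a disjoint union of contours around each). Everything after that --- the block-inverse formula and the termwise integration --- is routine. An alternative that avoids contours altogether is to invoke the general fact that $f$ of a block upper triangular matrix is again block upper triangular with diagonal blocks $f(G_m)$ and $f(H_m^H)$, and then merely identify the off-diagonal block with $X_m$; but since $f$ is given here through~\eqref{eq:cauchy_integral}, the direct computation above is the most transparent route, and it makes clear that $L_m^\Arn = V_m X_m W_m^H$ for the quantity in~\eqref{eq:arnoldi_approximation_frechet}, which is the practical content of the lemma.
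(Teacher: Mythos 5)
Your proposal is correct and follows essentially the same route as the paper's proof: compute $(tI-B)^{-1}$ via the block-triangular inverse formula and integrate the Cauchy representation blockwise. The paper states this in two lines without the contour bookkeeping you carefully spell out; your extra care there (and the sign check in the $(1,2)$ block) is sound but not a different method.
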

\begin{proof}
For the inverse of $tI - B$, where $B$ is a a block matrix of the form~\eqref{eq:B}, block Gaussian elimination yields
$$(tI-B)^{-1} = \left[\begin{array}{cc}(tI-G_m)^{-1} & (tI-G_m)^{-1}E_m(tI-H_m^H)^{-1} \\ 0 & (tI-H_m^H)^{-1}\end{array}\right].$$
Integrating this componentwise for $t \in \Gamma$ gives the desired result.
\end{proof}

We have hence arrived at a rank $m$ approximation of the \Fd, which can be compactly written as
\begin{equation}\label{eq:arnoldi_approx_compact}
L_m^\Arn = \eta V_mX_mW_m^H.
\end{equation}
The resulting method is summarized in Algorithm~\ref{alg:arnoldi_frechet}. Note that if $A$ is Hermitian but $\vy \neq \vz$, we can use a variant of Algorithm~\ref{alg:arnoldi_frechet} in which the Arnoldi process in lines~\ref{line:calltoarnoldi1} and~\ref{line:calltoarnoldi2} is replaced by the Lanczos process.

\begin{algorithm}
\caption{\label{alg:arnoldi_frechet}Arnoldi approximation of $L_f(A,\eta\vy\vz^H)$.}
\begin{algorithmic}[1]
\State \textbf{Input:} $m \in \N$, $A \in \Cnn$, $\eta \in \C$, $\vy, \vz \in \Cn$, function $f$
\State \textbf{Output:} Rank $m$ approximation $L_m^\Arn = \eta V_mX_mW_m^H \approx L_f(A,\eta\vy\vz^H)$
\State compute $V_m, G_m$ via $m$ Arnoldi steps for $A$ and $\vy$\label{line:calltoarnoldi1}
\State compute $W_m, H_m$ via $m$ Arnoldi steps for $A^H$ and $\vz$\label{line:calltoarnoldi2}
\State compute $X_m$ via~\eqref{eq:fB}
\If{desired}
\State form $L_m^\Arn \leftarrow \eta V_mX_mW_m^H$
\Else
\State {return low-rank factors $V_m,W_m,X_m$}
\EndIf
\end{algorithmic}
\end{algorithm}

\begin{remark}
We brief\/ly remark that the result of Lemma~\ref{lem:arnoldi_approx_block} can be seen as a generalization of the formula~\eqref{eq:2x2block} which relates the \Fd\ to the (1,2)-block of $f$ evaluated on a block matrix. In particular, when $A = A^H$, $\vy = \vz$, we have $V_m = W_m$ and both matrices $G_m$ and $H_m^H$ coincide with the tridiagonal matrix $T_m$ from the Lanczos process. Thus, the matrix~\eqref{eq:B} has the form
$$\left[\begin{array}{cc}T_m & \eta\|\vy\|^2 \ve_1\ve_1^H\\ 0 & T_m\end{array}\right],$$
so that by~\eqref{eq:2x2block}, we find
$$X_m = L_f(T_m,\eta\|\vy\|^2\ve_1\ve_1^H),$$
i.e., the approximation~\eqref{eq:arnoldi_approx_compact} agrees with~\eqref{eq:lanczos_approximation_frechet} in the Hermitian case.\hfill$\diamond$
\end{remark}

\subsection{Two-sided Lanczos for non-Hermitian $A$}\label{subsec:twosided}
An alternative to using the Arnoldi method when $A \neq A^H$, $\vy \neq \vz$ is to use the two-sided Lanczos method~\cite[Section 7.1]{Saad2003} (sometimes also called non-Hermitian Lanczos). If $\vy^H\!\!\vz = 1$ (which we can always assume without loss of generality as long as $\vy$ and $\vz$ are not orthogonal to each other), this method uses a coupled three-term recursion to compute \emph{bi-orthonormal} bases $\Vhat_m, \What_m$ of $\spK_m(A,\vy)$ and $\spK_m(A^H\!,\vz)$, respectively, i.e., $\Vhat_m^H\What_m = I$. Note that one iteration of the two-sided Lanczos process requires performing \emph{two} matrix vector products, one with $A$ and one with $A^H$, so that the number of matrix vector products for computing the bi-orthonormal bases $\Vhat_m$ and $\What_m$ in the two-sided Lanczos method is the same as that of computing the orthonormal bases $V_m, W_m$ in the Arnoldi method outlined in section~\ref{subsec:arnoldi}.

We denote by
\begin{equation}\label{eq:WAV}\nonumber
\That_m = \What_m^H\!\!A \Vhat_m,
\end{equation}
the orthogonal projection of $A$ onto $\spK_m(A,\vy)$ along $\spK_m(A^H\!,\vz)$, which is tridiagonal and contains the coefficients from the bi-orthonormalization procedure. Then, we have the following \emph{two-sided Lanczos relations}
\begin{eqnarray}
A\Vhat_m &=& \Vhat_m\That_m + \widehat{t}_{m+1,m}\vvhat_{m+1}\ve_m^H,\label{eq:twosided_lanczos_relation1}\nonumber\\
A^H\What_m &=& \What_m\That_m^H + \widehat{t}_{m,m+1}\vwhat_{m+1}\ve_m^H.\label{eq:twosided_lanczos_relation2}\nonumber
\end{eqnarray}
The corresponding approximations for the solutions of the shifted linear systems~\eqref{eq:shifted_systems_solutions} are then given by
\begin{equation}\label{eq:linear_systems_approximations_twosided}
\widehat{\vx}_m(t) := \|\vy\|\Vhat_m(tI-\That_m)^{-1}\ve_1 \quad\text{ and }\quad \widehat{\vu}_m(t) := \|\vz\| \What_m(tI-\That_m)^{-H}\ve_1.
\end{equation}
As before, we substitute the approximations~\eqref{eq:linear_systems_approximations_twosided} into~\eqref{eq:frechet_derivative_integral_compact_approx} and obtain the approximation
\begin{equation}
\widehat{L}_m = \Vhat_m L_f(\That_m,\eta \|\vy\|  \|\vz\| \ve_1\ve_1^H) \What^H,\label{eq:twosided_approximation_frechet}\nonumber
\end{equation}
for $L_f(A,\eta\vy\vz^H)$.
We will refrain from giving an explicit algorithm for this approach,
as it is a completely straight-forward modification of Algorithm~\ref{alg:lanczos_frechet}.

\begin{remark}\label{rem:breakdown}
A potential disadvantage of the two-sided Lanczos method when compared to the standard Arnoldi and Lanczos method is the possibility of a \emph{serious breakdown}. This happens when $\widehat{\vv}_{j}^H\widehat{\vw}_{j} = 0$ with $\widehat{\vv}_{j} \neq \vnull$ and $\widehat{\vw}_{j} \neq \vnull$ for some $j$. In that case, the two-sided Lanczos iteration in its most basic form cannot be continued. We will, however, not go into detail on this topic, as serious breakdowns very rarely appear in practice, and standard \emph{look-ahead} techniques for avoiding breakdowns, as discussed in, e.g.,~\cite{FreundGutknechtNachtigal1993,ParlettTaylorLiu1985}, can be straightforwardly used in our setting. While there are also so-called \emph{incurable} breakdowns that cannot be prevented by look-ahead techniques~\cite{ParlettTaylorLiu1985}, these techniques typically work well in practical situations.
\end{remark}

\subsection{Block Lanczos for Hermitian $A$}\label{subsec:block}

When $A$ is Hermitian, but $\vy \neq \vz$, Algorithm~\ref{alg:lanczos_frechet} cannot be used. Instead of using the Arnoldi-based Algorithm~\ref{alg:arnoldi_frechet}, it is also possible to use a \emph{block Lanczos approach}, see, e.g.,~\cite{Saad2003}, or~\cite{FrommerLundSzyld2018,Lund2018} for recent work concerning the usage of block Krylov subspace methods in the matrix function context.

Collecting the two vectors $\vy,\vz$ in a block vector $Y = [\vy, \vz] \in \C^{n \times 2}$, we construct an orthonormal basis $V_m^\Box$ of the \emph{block Krylov subspace}
\begin{equation}\label{eq:block_krylov_subspace}
\spK_m^\Box(A,Y) := \Span\{Y, AY, A^2Y,\dots,A^{m-1}Y\}.
\end{equation}
We give one possible basic implementation of the block Lanczos method as Algorithm~\ref{alg:block_lanczos}. For ease of presentation, we assume that all $V_j$ computed throughout the algorithm are linearly independent, i.e, that the block Krylov subspace~\eqref{eq:block_krylov_subspace} is of full dimension $2m$. If this is not the case, special care has to be taken in order to remove linearly dependent vectors, a process known as \emph{deflation}, see, e.g.,~\cite{Ruhe1979}.

\begin{algorithm}
\caption{\label{alg:block_lanczos}Block Lanczos process for a Hermitian $A$.}
\begin{algorithmic}[1]
\State \textbf{Input:} $m \in \N$, $A \in \Cnn$ Hermitian, $Y \in \C^{n \times 2}$
\State \textbf{Output:} Orthonormal base $\Vcal_m^\Box = [V_1,\dots,V_{m}]$ of $\spK^\Box_m(A,V)$,
block tridiagonal matrix $\Tcal^\Box_m = (\Vcal^\Box)^H_m A \Vcal^\Box_m$
\For{$j=1,2,\dots$,m}
\State $T_{ij} \leftarrow V_i^* A V_j, \quad i = 1, \ldots, j$ 
\State $i_0 \leftarrow \max\{1,j-2\}$ 
\State $W_j \leftarrow AV_j - \sum_{i=i_0}^j V_i T_{ij}$ 
\State Compute QR decomposition $W_j = V_{j+1} T_{j+1,j}$ 
\EndFor
\State $\Tcal^\Box_m \leftarrow (\Vcal_m^\Box)^H A \Vcal^\Box_m = (T_{ij})_{i,j=1}^m$ 
\State $\Vcal^\Box_m \leftarrow [V_1,\dots,V_{m}]$ 
\end{algorithmic}
\end{algorithm}

The block tridiagonal matrix $\Tcal^\Box_m$ of (block-)orthogonalization coefficients satisfies a block analogue of~\eqref{eq:lanczos_relation}
\begin{equation}\label{eq:block_lanczos_relation}\nonumber
A\Vcal_m^\Box = \Vcal_m^\Box\Tcal_m^\Box + V_{m+1}T_{m+1,m}E_m^H
\end{equation}
where $T_{m+1,m} \in \C^{2 \times 2}$ and $E_m = [\ve_{2m-1}, \ve_{2m}]$. In particular $\Tcal^\Box_m = (\Vcal_m^\Box)^HA\Vcal_m^\Box$. As
$$\spK_m^\Box(A,Y) = \spK_m(A,\vy) \cup \spK_m(A,\vz),$$
approximations to both $\vx(t)$ and $\vu(t)$ from~\eqref{eq:shifted_systems_solutions} can be extracted from $\spK_m^\Box(A,Y)$. The standard choice for these approximations is given by
\begin{equation}\label{eq:shifted_linear_systems_approximations2}\nonumber
\begin{aligned}
\vx_m^\Box(t) &:=  \Vcal_m^\Box (tI-\Tcal_m^\Box)^{-1} (\Vcal_m^\Box)^H \vy,\\
\vu_m^\Box(t) &:=  \Vcal_m^\Box (tI-\Tcal_m^\Box)^{-1} (\Vcal_m^\Box)^H \vz.\\
\end{aligned}
\end{equation}
An approximation to the \Fd\ is then obtained in the usual way as
\begin{equation} \label{eq:block_lanczos_approx}\nonumber
L_m^\Box =  \Vcal^\Box_m L_f(\Tcal^\Box_m, \eta \vy_m \vz_m^H) \Vcal_m^H,
\end{equation}
where $\vy_m = (\Vcal_m^\Box)^H \vy$, $\vz_m = (\Vcal_m^\Box)^H \vz$.
We again refrain from giving an explicit algorithm for this approach.

\begin{remark}
There are two main advantages of using a block Krylov approach over the standard Krylov approach from section~\ref{subsec:arnoldi}: The standard Arnoldi method requires $2m$ matrix vector products with $A$, while the block Lanczos algorithm requires $m$ matrix block vector products with blocks of size $n \times 2$. While mathematically, this amounts to the same number of operations, one can typically implement matrix block vector products such that they benefit from more cache-friendly memory access and require \emph{less} computation time then an equivalent number of individual matrix vector products, see, e.g.,~\cite{BakerDennisJessup2006}.

In addition, the block Krylov space $\spK^\Box_m(A,Y)$ is the union of the Krylov subspaces for $\vy$ and $\vz$, the approximate solution for one system can also use information contained in the Krylov subspace for the \emph{other} system. Therefore, a smaller overall subspace dimension may potentially suffice to reach the desired accuracy. In~\cite{Birk2015}, savings of up to 35\% in the number of iterations are reported (for block sizes larger than two). This largely depends on the vectors $\vy, \vz$ though and is difficult to quantify in advance.

On the other hand, block Krylov methods are much more complicated to implement efficiently, especially if one wants to take proper care of issues like deflation.\hfill$\diamond$
\end{remark}

\subsection{Extended and rational Krylov subspace methods}\label{subsec:rational}
All approaches outlined so far have been based on standard
(polynomial) Krylov subspaces. Of course, we can also use other
projection spaces in our methods. In recent years, \emph{rational
  Krylov subspaces} have been successfully applied in matrix function
computations
(see e.g.,~\cite{DruskinKnizhnerman1998,Guettel2013,GuettelKnizhnerman2013,KnizhnermanSimoncini2010})
and often have much better approximation properties than polynomial
Krylov spaces. Therefore, it is natural to also consider these
subspaces in our projection approach. Rational Krylov subspaces are of
the form
\begin{equation}\label{eq:rational_ks}\nonumber
\spQ_m(A,\vy) = q_{m-1}(A)^{-1}\spK_m(A,\vy),
\end{equation}
where $q_{m-1}(z) = (z-\xi_1)(z-\xi_2)\cdots(z-\xi_{m-1})$ is a polynomial of degree $m-1$. The scalars $\xi_1,\dots,\xi_{m-1} \in \C \cup \{\infty\}$ are called the \emph{poles} of the rational Krylov subspace. Similarly to the polynomial case we can define an approximation for $L_f(A,\eta\vy\vz^H)$ based on rational Krylov subspaces. We just brief\/ly summarize the approach for the Hermitian case, the extension to the non-Hermitian case is then straightforward.

For this, let $\tildeV$ denote an orthonormal basis of $\spQ_m(A,\vy)$,
which can be computed by the \emph{rational Arnoldi method}
(see e.g.,~\cite{Ruhe1984,Ruhe1994}) and let
$\tildeT = \tildeV^H A \tildeV$ denote the compression of $A$ onto
$\spQ_m(A,\vy)$. Then, an approximation for $L_f(A,\eta \vy\vy^H)$ is
obtained completely analogously to the polynomial case as
\begin{equation}\label{eq:rational_approximation_frechet}
L_m^\Rat := \tildeV L_f(\tildeT,\eta \|\vy\|^2 \ve_1\ve_1^H) \tildeV^H.
\end{equation}

The rational Arnoldi algorithm requires (for finite poles) the
solution of a (shifted) linear system with $A$ in each iteration, in
addition to a matrix-vector product. Also note that, even in the
Hermitian case, no short recurrences for the basis vectors exist in
general. An exception to this are \emph{extended Krylov subspaces} which only use the poles
$0$ and $\infty$ (see e.g.,~\cite{DruskinKnizhnerman1998,Simoncini2007,JagelsReichel2011}).

The efficiency of using rational Krylov subspace methods thus largely depends on how efficiently shifted systems with $A$ can be solved, and how often the poles vary---when using a direct solver, one Cholesky factorization needs to be computed per pole. In cases where $A$ is banded with rather small bandwidth, rational Krylov methods are thus particularly attractive. An additional benefit of the lower iteration number when using a rational Krylov method in our setting is that it also implies that the resulting approximation $L_m^\Rat$ is of lower rank than when using a polynomial method, such that it requires less storage and subsequent matrix-vector products with it are less costly.

\begin{remark}
The usage of rational Krylov subspaces can of course be combined with a block Krylov approach similar to that of section~\ref{subsec:block}, leading to a \emph{rational block Krylov method}, see, e.g.~\cite{ElsworthGuettel2020}. A combination of rational Krylov subspaces with a two-sided approach as in section~\ref{subsec:twosided} is in principle also possible, but as there are no short recurrences even in the Hermitian case, there also do not exist short-recurrence two-sided rational methods for the non-Hermitian case. The approach of using bi-orthonormal bases thus does not seem very attractive in this setting. An exception is the extended Krylov case, for which a two-sided short-recurrence method was recently derived by Schweitzer in~\cite{Schweitzer2017}.\hfill$\diamond$
\end{remark}

\section{Convergence analysis for Hermitian $A$}\label{sec:convergence}

In this section, we investigate the convergence behavior of the proposed Krylov subspace methods for approximating the \Fd. We restrict ourselves to the case of Hermitian $A$ and standard polynomial Krylov methods. An extension of the result to block Krylov methods is possible in a straight-forward way. Let us note that Kressner also provides a convergence result for Krylov approximations to the \Fd{} in~\cite[Corollary~1]{Kressner2019}, which relates the error of the Krylov approximation to the error of a polynomial approximation of $f'$; see also recent work by Crouzeix and Kressner~\cite[Corollary~6.1]{CrouzeixKressner2020}.

We begin by stating a result for the exponential function of a Hermitian negative semidefinite matrix. The technique of proof used for this result largely resembles that of the famous convergence result of Hochbruck and Lubich for $\exp(A)\vb$, see~\cite{HochbruckLubich1997}. We state the result for the approximation $L_m^\Arn$ from~\eqref{eq:arnoldi_approximation_frechet} in order to cover the more general case $\vy \neq \vz$. Of course, it holds in the same way for $L_m^\Lan$ from~\eqref{eq:lanczos_approximation_frechet} when $\vy = \vz$.

\begin{theorem} \label{thm:apriori} \em
Suppose $A$ is Hermitian negative semidefinite with its spectrum inside the interval $[-4\rho, 0]$. Then, we have for the error
$\varepsilon_m := \|L_f(A, \vy \vz^H) - L_m^\Arn\|$ the bound
\begin{equation} \label{eq:apriori_bounds}
\begin{aligned}
\varepsilon_m & \leq  10 \, \frac{(4\rho \tau)^2}{m^2} \, e^{-m^2/(5\rho \, t)} \| \vy \| \| \vz \|,
\quad \quad \quad \sqrt{4 \rho \, t} \leq m \leq 2 \rho \, t, \\
\varepsilon_m & \leq \frac{40}{\rho \, t} \, e^{- \rho \, t} \left( \frac{e  \rho \, t}{m} \right)^m \| \vy \| \| \vz \|,
\quad \quad \quad  \quad m \geq 2 \rho t.
\end{aligned}
\end{equation}
\end{theorem}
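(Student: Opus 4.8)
The plan is to reduce the error of the bivariate Krylov approximation $L_m^\Arn$ to the well-understood error of the ordinary Lanczos/Arnoldi approximation of $e^{rA}\vb$, and then to invoke the a~priori bounds of Hochbruck and Lubich~\cite{HochbruckLubich1997}; here $t>0$ denotes the time parameter in $f(x)=e^{tx}$. First I would exploit the special structure of the exponential: by Lemma~\ref{lem:arnoldi_approx_block}, the matrix $X_m$ in $L_m^\Arn=\eta V_mX_mW_m^H$ is the $(1,2)$-block of $f(B)$ with $B$ from~\eqref{eq:B}, and for $f(x)=e^{tx}$ the classical integral formula for the $(1,2)$-block of the exponential of a block-triangular matrix gives $X_m=\eta\|\vy\|\|\vz\|\int_0^t e^{rG_m}\ve_1\ve_1^H e^{(t-r)H_m^H}\,\d r$, so that
$$L_m^\Arn=\eta\int_0^t\bigl(\|\vy\|\,V_me^{rG_m}\ve_1\bigr)\bigl(\|\vz\|\,W_me^{(t-r)H_m}\ve_1\bigr)^H\,\d r .$$
Now $\|\vy\|\,V_me^{rG_m}\ve_1$ is precisely the $m$-step Lanczos/Arnoldi approximation of $e^{rA}\vy$ (using $G_m=V_m^HAV_m$ and $V_m^H\vy=\|\vy\|\ve_1$), and likewise for the second factor, whereas the exact Fréchet derivative of the exponential admits the matching representation $L_f(A,\vy\vz^H)=\int_0^t(e^{rA}\vy)(e^{(t-r)A}\vz)^H\,\d r$.

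Next I would bound the integrand pointwise. Writing $\vp_r=e^{rA}\vy$, $\widetilde\vp_r=\|\vy\|V_me^{rG_m}\ve_1$, $\vq_r=e^{(t-r)A}\vz$, $\widetilde\vq_r=\|\vz\|W_me^{(t-r)H_m}\ve_1$, the identity $\vp_r\vq_r^H-\widetilde\vp_r\widetilde\vq_r^H=(\vp_r-\widetilde\vp_r)\vq_r^H+\widetilde\vp_r(\vq_r-\widetilde\vq_r)^H$ together with submultiplicativity of the two-norm gives (taking $\eta=1$ as in the statement)
$$\varepsilon_m\le\int_0^t\Bigl(\|\vp_r-\widetilde\vp_r\|\,\|\vq_r\|+\|\widetilde\vp_r\|\,\|\vq_r-\widetilde\vq_r\|\Bigr)\,\d r .$$
Since $A$ is negative semidefinite, $\|e^{rA}\|\le1$ for $r\ge0$, so $\|\vq_r\|\le\|\vz\|$; moreover $G_m=V_m^HAV_m$ and $H_m$ are Hermitian negative semidefinite (their eigenvalues lie in the field of values of $A$, hence in $[-4\rho,0]$), so $\|e^{rG_m}\|\le1$ and, using $\|V_m\|=1$, also $\|\widetilde\vp_r\|\le\|\vy\|$. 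Hence $\varepsilon_m\le\|\vy\|\|\vz\|\int_0^t\bigl(\|e^{rA}\vv_1-V_me^{rG_m}\ve_1\|+\|e^{(t-r)A}\vw_1-W_me^{(t-r)H_m}\ve_1\|\bigr)\,\d r$, with $\vv_1=\vy/\|\vy\|$, $\vw_1=\vz/\|\vz\|$; that is, the error is controlled by the integral over $[0,t]$ of the pointwise \emph{univariate} Lanczos/Arnoldi errors for $\exp$.

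Finally I would insert the Hochbruck--Lubich bounds for $\|e^{rA}\vv_1-V_me^{rG_m}\ve_1\|$, which apply verbatim because $A$ is Hermitian negative semidefinite with spectrum in $[-4\rho,0]$, and integrate over $r\in[0,t]$. This last integration is the main obstacle: one has to (i)~verify that the regime hypothesis under which the chosen Hochbruck--Lubich estimate holds is satisfied for \emph{all} $r\in[0,t]$, which is exactly what forces the two cases $\sqrt{4\rho t}\le m\le2\rho t$ and $m\ge2\rho t$ in the statement, and (ii)~carry out integral estimates of the type $\int_0^t e^{-m^2/(5\rho r)}\,\d r\le\tfrac{5\rho t^2}{m^2}e^{-m^2/(5\rho t)}$ (for instance via the substitution $u=1/r$, bounding the resulting incomplete-Gamma-type integral by its endpoint contribution), which is what produces the polynomial prefactor $(4\rho t)^2/m^2$ and the explicit constants $10$ and $40$ in~\eqref{eq:apriori_bounds}. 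Everything else is routine bookkeeping, and the identical argument yields the corresponding bound for $L_m^\Lan$ when $\vy=\vz$.
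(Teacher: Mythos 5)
Your reduction is genuinely different from the paper's and, up to the last step, correct. The paper stays in the ``resolvent domain'': it writes the error as a contour integral involving $\Delta_m(\lambda)=(\lambda I-A)^{-1}-V_m(\lambda I-T_m)^{-1}V_m^H$ sandwiched against the exact/approximate solution of the second shifted system, bounds the extra resolvent factors by $1/d(\lambda,\mathcal F(A))$, and then reruns the Hochbruck--Lubich contour/polynomial-approximation argument with the constant $M$ replaced by $\ell(\partial\bbE)/[d(\partial\bbE)\,d(\Gamma)^2]$; the choice $\epsilon=m^2/(8(\rho\tau)^2)$ then produces the stated prefactors directly. You instead pass to the time domain via the Duhamel representation $L_{\exp(t\cdot)}(A,\vy\vz^H)=\int_0^t(e^{rA}\vy)(e^{(t-r)A}\vz)^H\d r$ and its Krylov counterpart, which is a clean and correct identity (and your identification of $\|\vy\|V_me^{rG_m}\ve_1$ with the univariate Arnoldi approximant, and the bounds $\|e^{rA}\|\le1$, $\|e^{rG_m}\|\le1$, are all fine). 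This buys a very transparent reduction to $\varepsilon_m\le\|\vy\|\|\vz\|\int_0^t\bigl(\delta_m(r)+\delta_m(t-r)\bigr)\d r$ with $\delta_m$ the univariate error.

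The gap is in the step you dismiss as routine bookkeeping: integrating the Hochbruck--Lubich bounds over $r\in[0,t]$ does \emph{not} yield the stated inequalities. Two concrete failures. First, for $m\ge2\rho t$ the whole range of $r$ lies in the superlinear regime, and the best you get from $\int_0^{\rho t}s^{m-1}e^{-s}\d s\le(\rho t)^me^{-\rho t}$ is $\varepsilon_m\le\frac{40}{\rho}e^{-\rho t}(e\rho t/m)^m\|\vy\|\|\vz\|$, which is the claimed bound multiplied by $t$ --- so the theorem as stated is not recovered for $t>1$. Second, and more seriously, for $\sqrt{4\rho t}\le m\le2\rho t$ you must split the integral at $r=m/(2\rho)$, and the piece $\int_0^{m/(2\rho)}\delta_m(r)\d r$ is bounded (using the superlinear HL estimate, which is all you have there) by a quantity of order $\rho^{-1}(e/2)^me^{-m/2}=\rho^{-1}e^{-(\ln2-\frac12)m}\approx\rho^{-1}e^{-0.19m}$. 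Near the regime boundary $m\approx2\rho t$ the claimed bound is $\approx40\,e^{-2m/5}$, which decays strictly faster; hence this leftover term cannot be absorbed, and no amount of care in the endpoint/incomplete-Gamma estimates fixes it, because it comes from a length-$\Theta(t)$ interval of $r$ on which the pointwise HL bound genuinely is only $O(m^{-1}e^{-0.19m})$. So your strategy proves \emph{a} bound of the same flavour, but not the one in the theorem; to get the stated rates you need the paper's route, in which the time parameter never enters an integration and the HL estimates are applied once, on the contour, with only the multiplicative constant modified.
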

\begin{proof}
Recall first the notation
\begin{equation*}
\vx_m(t) = \|\vy\|V_m(tI-T_m)^{-1}\ve_1 \quad\text{ and }\quad \vu_m(t) = \|\vz\| W_m(tI-T_m)^{-H}\ve_1.
\end{equation*}
By adding and subtracting $(tI - A)^{-1} \vy \, \vu_m(t)$ in the integrand, we see that
\begin{equation} \label{eq:eq_1}
\begin{aligned}
 L_f(A, y z^H) - L_m^\Arn
=  \frac{1 }{2 \pi i } \int_\Gamma f(t) \left[
( \Delta_m(t) \vy  \vu_m(t) )  + (t I - A)^{-1} \vy \vz^H \Delta_m(t) \right]\d t,
\end{aligned}
\end{equation}
where
\begin{equation} \label{eq:eq_2}\nonumber
\begin{aligned}
\Delta_m(t) =  (tI - A)^{-1} -  V_m (tI- T_m)^{-1} V_m^H.
\end{aligned}
\end{equation}
Then, using the bounds
\begin{equation} \label{eq:resolvent_bounds_1}
\| (tI-A)^{-1} \| \leq \frac{1}{d(z, \mathcal{F}(A))} \quad \textrm{and} \quad \| V_m(tI-T_m)^{-1}V_m^H \| \leq \frac{1}{d(z, \mathcal{F}(A))},
\end{equation}
%
the problem of bounding
\begin{equation} \label{eq:eq_3}\nonumber
\| \Delta_m(t) \vy \|, \quad \textrm{and} \quad \| \Delta_m(t) \vz \|,
\end{equation}
can be turned
into a polynomial approximation problem on the complex plane, as in \cite{BR09} and \cite{HochbruckLubich1997}.
To obtain the bounds \eqref{eq:apriori_bounds} we inspect Lemma 1 and Theorem 2 of~\cite{HochbruckLubich1997}.
From \eqref{eq:resolvent_bounds_1} it clearly follows that
\begin{equation} \label{eq:resolvent_bounds_2}
\|\vu_m(t)\| \leq \frac{\| \vz \|}{d(z, \mathcal{F}(A))} \quad
\textrm{and} \quad \|(t I - A)^{-1}\vy\| \leq \frac{\|\vy\|}{d(z, \mathcal{F}(A))}.
\end{equation}
Take $\bbE$ to be a convex set in the complex plane satisfying the
conditions of~\cite[Lemma\;1]{HochbruckLubich1997}.
From \eqref{eq:eq_1}, \eqref{eq:resolvent_bounds_2} and Lemma 1 of~\cite{HochbruckLubich1997},
we see that for the norm of the first term of \eqref{eq:eq_1}, i.e., for
$$
\varepsilon_{1,m}(t) :=  \left\| \frac{1 }{2 \pi i } \int_\Gamma f(t)   \Delta_m(t) \vy  \vu_m(t)   \, \d t \right\|,
$$
the bound Lemma 1 of~\cite{HochbruckLubich1997} holds with the constant $M$ replaced by
$M = \ell(\partial \bbE)/ [d(\partial \bbE) \cdot d(\Gamma)^2 ]$ (and multiplied by $\|\vz\| \| \vy \|$).
Then, choosing the contour $\Gamma$ as in proof of~\cite[Theorem 2]{HochbruckLubich1997},
we see that instead of the bound (3.4) of~\cite[Theorem 2]{HochbruckLubich1997},
we have
$$
\varepsilon_{1,m}(t) \leq
\frac{ e^{ 2 \rho \tau \epsilon} r^{-m}}{\epsilon}
\left( \frac{1 + \epsilon}{\rho \tau \epsilon}
+ \sqrt{\frac{(2+\epsilon)\pi}{\rho \tau \epsilon} }\right)  \| \vz \| \| \vy \|,
$$
where $\epsilon$ can be chosen freely. We choose as in~\cite[Theorem 2]{HochbruckLubich1997}
$$
\epsilon = \frac{m^2}{8 (\rho \tau)^2}.
$$
For $\epsilon \leq \frac{1}{2}$, i.e., for $m \leq 2 \rho \tau$, we have the bound (3.1) of~\cite[Theorem 2]{HochbruckLubich1997}
multiplied by $\epsilon^{-1}$. When $m\geq 2\rho \tau$, $\epsilon^{-1} \leq 2$, and
we have the bound (3.2) multiplied by 2.
The second term of \eqref{eq:eq_1} can be bounded similarly, from which the bound \eqref{eq:apriori_bounds} follows.
\end{proof}

\begin{example}\label{eg:Thm31}
 \begin{figure}
\centering
\tikzsetnextfilename{apriori}
\pgfplotsset{height=0.35\linewidth,width=0.85\linewidth,compat=1.10,every axis/.append style={legend style={/tikz/every even column/.append style={column sep=6pt}}}}
\pgfplotscreateplotcyclelist{list_apri}{%
color_peter1,line width=1pt, mark=none, solid\\
color_peter3,line width=1pt, mark=none, densely dotted\\
}
\noindent%
\begin{tikzpicture}[scale=1]%
    \begin{semilogyaxis}[legend style={at={(1,1.15)}}, 
   	anchor= north east, legend columns=5,cycle list name=list_apri, 
   	xmin=0, xmax=50,grid=major, 
   	xlabel={Krylov subspace size $m$}, ylabel={2-norm error}]
		\addplot+[] 
			table[header=false, x expr=\lineno+1, y index={0}, row sep=\\]
		   {6.7821e+00\\ 6.7277e+00\\ 5.7270e+00\\ 3.6629e+00\\ 2.1015e+00\\ 
		    1.2518e+00\\ 6.5868e-01\\ 3.7494e-01\\ 2.2887e-01\\ 1.2886e-01\\ 
		    7.0517e-02\\ 3.6572e-02\\ 1.7465e-02\\ 8.3720e-03\\ 4.1346e-03\\ 
		    1.5258e-03\\ 6.5697e-04\\ 3.0702e-04\\ 1.0899e-04\\ 4.1148e-05\\ 
		    1.6324e-05\\ 5.6845e-06\\ 1.8833e-06\\ 6.8186e-07\\ 2.2136e-07\\ 
		    8.1721e-08\\ 2.2681e-08\\ 7.2674e-09\\ 2.1698e-09\\ 5.8976e-10\\ 
		    1.7450e-10\\ 4.7601e-11\\ 1.3147e-11\\ 3.2536e-12\\ 8.3400e-13\\ 
		    2.2923e-13\\ 7.5671e-14\\ 5.7110e-14\\ 5.8910e-14\\ 5.6268e-14\\ 
		    5.7336e-14\\ 5.6252e-14\\ 5.7326e-14\\ 5.5642e-14\\ 5.8316e-14\\ 
		    5.5797e-14\\ 5.8773e-14\\ 5.6271e-14\\ 5.7311e-14\\ 5.6278e-14\\};
	\addlegendentry{error}
		\addplot+[] 
			table[header=false, x expr=\lineno+1, y index={0}, row sep=\\]
		   {0.0000e+00\\ 0.0000e+00\\ 0.0000e+00\\ 0.0000e+00\\ 0.0000e+00\\ 
		   	0.0000e+00\\ 1.1704e+04\\ 6.6378e+03\\ 3.7327e+03\\ 2.0675e+03\\ 
		   	1.1226e+03\\ 5.9539e+02\\ 3.0767e+02\\ 1.5457e+02\\ 7.5380e+01\\ 
		   	3.5635e+01\\ 1.6312e+01\\ 7.2241e+00\\ 3.0929e+00\\ 8.0129e+00\\ 
		   	3.9082e+00\\ 1.8175e+00\\ 8.0764e-01\\ 3.4362e-01\\ 1.4023e-01\\ 
		   	5.4982e-02\\ 2.0744e-02\\ 7.5419e-03\\ 2.6458e-03\\ 8.9669e-04\\ 
		   	2.9394e-04\\ 9.3296e-05\\ 2.8701e-05\\ 8.5656e-06\\ 2.4823e-06\\ 
		   	6.9908e-07\\ 1.9149e-07\\ 5.1053e-08\\ 1.3258e-08\\ 3.3556e-09\\ 
		   	8.2838e-10\\ 1.9957e-10\\ 4.6946e-11\\ 1.0790e-11\\ 2.4242e-12\\ 
		   	5.3266e-13\\ 1.1453e-13\\ 2.4105e-14\\ 4.9690e-15\\ 1.0036e-15\\ };
	\addlegendentry{a priori bound \eqref{eq:apriori_bounds}}
    \end{semilogyaxis}
\end{tikzpicture}
 \caption{Convergence v.s.\ a priori bound \eqref{eq:apriori_bounds}, for Example~\ref{eg:Thm31}.}
 \label{fig:apriori}
 \end{figure}
Consider the following simple numerical example to illustrate the
bound given by Theorem~\ref{thm:apriori}.
Set $A = 10 \cdot \mathrm{diag}(1,\ -2,\ 1) \in \mathbb{R}^{n \times n}$, and take randomly $\vy \in \mathbb{R}^n$
and $\vz \in \mathbb{R}^n$. Set $n=100$. Figure~\ref{fig:apriori} shows the convergence of the approximation \eqref{eq:arnoldi_approximation_frechet} v.s.\ the bound given by Theorem~\ref{thm:apriori}.\hfill$\diamond$
\end{example}

Next, we prove a result for the class of Stieltjes functions. It is based on the classical convergence result for the conjugate gradient method (CG)~\cite{HestenesStiefel1952}. It bounds the energy norm
\begin{equation*}
\|\ve\|_A = \sqrt{\ve^H \!A \ve}
\end{equation*}
of the error, and we restate it here for the sake of completeness.

\begin{theorem}[see, e.g.,~\cite{Saad2003}]\label{the:cg_convergence} \em
Let $A \in \Cnn$ be Hermitian positive definite and $\vx_0,\vy \in \Cn$. Further, let $\vx^\ast$ denote the exact solution of the linear system $A\vx = \vy$, let $\vx_m$ be the $m$th CG iterate with initial guess $\vx_0$ and let $\kappa$ denote the Euclidean norm condition number of $A$. Then the error in the CG method satisfies
\begin{equation*}
\|\vx^\ast - \vx_m\|_A \leq 2\left(\frac{\sqrt{\kappa}-1}{\sqrt{\kappa}+1}\right)^m\|\vx^\ast-\vx_0\|_A.
\end{equation*}
\end{theorem}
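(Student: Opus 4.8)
The plan is to prove this classical estimate by the standard minimax/Chebyshev argument. First I would invoke the defining optimality property of CG: with initial residual $\vr_0 = \vy - A\vx_0$, the $m$th iterate satisfies $\vx_m \in \vx_0 + \spK_m(A,\vr_0)$ and minimizes $\|\vx^\ast - \cdot\|_A$ over that affine space. Writing an arbitrary element of it as $\vx_0 + q(A)\vr_0$ with $\deg q \le m-1$ and using $\vr_0 = A(\vx^\ast - \vx_0)$, the error of any such element equals $p(A)(\vx^\ast - \vx_0)$, where $p(z) = 1 - z\,q(z)$ ranges over all polynomials of degree $\le m$ with $p(0)=1$; denote this set by $\spP_m^0$. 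Optimality then gives
\[ \|\vx^\ast - \vx_m\|_A = \min_{p \in \spP_m^0} \|p(A)(\vx^\ast - \vx_0)\|_A. \]

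Next I would reduce this to a scalar problem. Since $A$ is Hermitian positive definite, diagonalize $A = U\Lambda U^H$; as $p(A)$ and $A^{1/2}$ commute, writing the error in the eigenbasis yields $\|p(A)\ve\|_A \le \big(\max_{\lambda \in \spec(A)}|p(\lambda)|\big)\,\|\ve\|_A$, and we may enlarge the maximum to the enclosing interval $[\lmin,\lmax]$. Hence
\[ \|\vx^\ast - \vx_m\|_A \le \Big(\min_{p \in \spP_m^0}\ \max_{\lambda \in [\lmin,\lmax]} |p(\lambda)|\Big)\,\|\vx^\ast - \vx_0\|_A. \]

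The heart of the argument is solving the polynomial minimax problem on the right. It is attained by the shifted and normalized Chebyshev polynomial $p^\ast(\lambda) = C_m(\ell(\lambda))/C_m(\ell(0))$, where $C_m$ is the Chebyshev polynomial of the first kind of degree $m$ and $\ell$ is the affine map taking $[\lmin,\lmax]$ onto $[-1,1]$; I would establish its optimality by the usual equioscillation argument (a competitor with strictly smaller sup-norm on $[\lmin,\lmax]$ would force $p^\ast - p$ to change sign at least $m+1$ times, impossible for a polynomial of degree $\le m$ unless $p \equiv p^\ast$). Since $|C_m| \le 1$ on $[-1,1]$, the minimax value equals $1/|C_m(\gamma)|$ with $\gamma := \ell(0) = \frac{\lmax+\lmin}{\lmax-\lmin} = \frac{\kappa+1}{\kappa-1} > 1$.

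It then remains to bound $C_m(\gamma)$ from below. Using $C_m(\cosh\theta)=\cosh(m\theta)$ with $\cosh\theta = \gamma$, one has $e^\theta = \gamma + \sqrt{\gamma^2-1}$; the elementary identity $\gamma^2 - 1 = 4\kappa/(\kappa-1)^2$ gives $e^\theta = \frac{\sqrt\kappa+1}{\sqrt\kappa-1}$, whence
\[ C_m(\gamma) = \tfrac12\big(e^{m\theta} + e^{-m\theta}\big) \ge \tfrac12\left(\frac{\sqrt\kappa+1}{\sqrt\kappa-1}\right)^{\! m}, \]
so the minimax value is at most $2\big(\frac{\sqrt\kappa-1}{\sqrt\kappa+1}\big)^m$, and combining the three displays yields the claim. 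The main obstacle is the Chebyshev step: correctly setting up the shifted, scaled polynomial, supplying the equioscillation proof that it is minimax-optimal, and then carrying through the elementary-but-easily-botched algebra converting $\gamma = (\kappa+1)/(\kappa-1)$ into the ratio $(\sqrt\kappa\pm1)$; the CG optimality property and the diagonalization step are routine. Since this is a textbook result, in the paper I would present only this sketch with a pointer to, e.g., \cite{Saad2003}.
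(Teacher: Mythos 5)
Your proof is correct: the CG optimality property, the reduction to the polynomial minimax problem on $[\lmin,\lmax]$ with the normalization $p(0)=1$, the shifted Chebyshev polynomial, and the algebra turning $\gamma=(\kappa+1)/(\kappa-1)$ into the ratio $(\sqrt{\kappa}+1)/(\sqrt{\kappa}-1)$ are all carried out accurately. The paper itself supplies no proof---it restates the theorem purely for completeness with a pointer to the literature---so your decision to give only a sketch plus the citation matches the paper exactly, and the details you do supply are the standard ones from that reference.
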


The proof of the following result, based on
Theorem~\ref{the:cg_convergence}, can be seen as a combination of
ideas used in~\cite{FrommerGuettelSchweitzer2014} for proving
convergence of restarted Krylov subspace methods for approximating
Stieltjes matrix functions and techniques used for analyzing
convergence of Krylov subspace methods for Lyapunov matrix equations
in~\cite{SimonciniDruskin2009}. We again use the more general case of the approximation $L_m^\Arn$ from~\eqref{eq:arnoldi_approximation_frechet}.

\begin{theorem}\label{the:convergence_stieltjes} \em
Let $A \in \Cnn$ be Hermitian positive definite, let $\vy,\vz \in \Cn$ with $\|\vy\| = \|\vz\| = 1$ and let $f$ be a Stieltjes function~\eqref{eq:stieltjes_integral}. Then the iterates $L_m^\Arn$ of Algorithm~\ref{alg:arnoldi_frechet} satisfy
\begin{equation}\label{eq:convergence_bound_stieltjes}
\|L_f(A,\eta\vy\vz^H) - L_m^\Arn\| \leq 4 |\eta \, f^\prime(\lmin)| \left(\frac{\sqrt{\kappa}-1}{\sqrt{\kappa}+1}\right)^m,
\end{equation}
where $\lmin$ is the smallest eigenvalue of $A$ and $\kappa$ denotes the Euclidean norm condition number of $A$.
\end{theorem}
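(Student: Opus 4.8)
The plan is to run, for Stieltjes functions, the same integral-representation argument used in Theorem~\ref{thm:apriori}, but with the contour-integral and polynomial-approximation machinery replaced by a direct appeal to the CG bound of Theorem~\ref{the:cg_convergence}. Using the Stieltjes analogue of~\eqref{eq:frechet_derivative_integral_compact} one has $L_f(A,\eta\vy\vz^H) = -\eta\int_0^\infty \vx^\ast(t)\,\vu^\ast(t)^H\dmu$ with $\vx^\ast(t)=(A+tI)^{-1}\vy$ and $\vu^\ast(t)=(A+tI)^{-1}\vz$ (here $A$ is Hermitian and $t$ real, so $(A+tI)^{-H}=(A+tI)^{-1}$), while the analogue of~\eqref{eq:arnoldi_approximation_frechet} reads $L_m^\Arn = -\eta\int_0^\infty \vx_m(t)\,\vu_m(t)^H\dmu$ with $\vx_m(t)=V_m(G_m+tI)^{-1}\ve_1$ and $\vu_m(t)=W_m(H_m+tI)^{-1}\ve_1$ (the Arnoldi compressions $G_m=V_m^H A V_m$ and $H_m=W_m^H A W_m$ being Hermitian here). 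Subtracting and using the telescoping identity $\vx^\ast\vu^{\ast H}-\vx_m\vu_m^H = (\vx^\ast-\vx_m)\vu^{\ast H}+\vx_m(\vu^\ast-\vu_m)^H$, it suffices to bound $\|\vx^\ast(t)-\vx_m(t)\|$ and $\|\vx_m(t)\|$ (and the analogous quantities for $\vu$) uniformly in $t\ge 0$ and then integrate against $\mu$.

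Next I would recognize $\vx_m(t)$ as a Krylov iterate for a shifted system. Since $\spK_m(A,\vy)=\spK_m(A+tI,\vy)$ for every $t$, the vector $\vx_m(t)$ is exactly the $m$th CG iterate (equivalently, the Galerkin approximation obtained by orthogonal projection onto $\spK_m(A,\vy)$), started from $\vx_0=\vnull$, for the Hermitian positive definite system $(A+tI)\vx=\vy$. Theorem~\ref{the:cg_convergence} then gives $\|\vx^\ast(t)-\vx_m(t)\|_{A+tI}\le 2c_t^m\,\|\vx^\ast(t)\|_{A+tI}$, where $c_t=(\sqrt{\kappa_t}-1)/(\sqrt{\kappa_t}+1)$ and $\kappa_t=\kappa(A+tI)=(\lmax+t)/(\lmin+t)$. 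Because $\kappa_t$ is non-increasing in $t\ge 0$, we get $c_t\le c:=(\sqrt{\kappa}-1)/(\sqrt{\kappa}+1)$ for all $t\ge 0$. Converting the energy norm to the Euclidean norm via $A+tI\succeq(\lmin+t)I$ and using $\|\vx^\ast(t)\|_{A+tI}^2=\vy^H(A+tI)^{-1}\vy\le(\lmin+t)^{-1}$ (here $\|\vy\|=1$) yields $\|\vx^\ast(t)-\vx_m(t)\|\le 2c^m/(\lmin+t)$, and the same for $\vu$. For the plain norms, $\|\vx^\ast(t)\|\le(\lmin+t)^{-1}$, and since the compression $G_m$ has spectrum in $[\lmin,\lmax]$ by Courant--Fischer, $\|\vx_m(t)\|\le\|(G_m+tI)^{-1}\|\le(\lmin+t)^{-1}$ as well; likewise for $\vu^\ast(t)$ and $\vu_m(t)$.

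Inserting these into the telescoped integrand gives $\|\vx^\ast(t)\vu^\ast(t)^H-\vx_m(t)\vu_m(t)^H\|\le 4c^m/(\lmin+t)^2$, whence
\[
\|L_f(A,\eta\vy\vz^H)-L_m^\Arn\| \;\le\; 4\,|\eta|\,c^m\int_0^\infty\frac{\dmu}{(\lmin+t)^2} \;=\; 4\,|\eta f'(\lmin)|\left(\frac{\sqrt{\kappa}-1}{\sqrt{\kappa}+1}\right)^{m},
\]
where the last equality uses the identity $-f'(x)=\int_0^\infty (x+t)^{-2}\dmu$ obtained by differentiating~\eqref{eq:stieltjes_integral} under the integral sign (legitimate at $x=\lmin>0$), together with the complete monotonicity of Stieltjes functions which gives $f'<0$. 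This is exactly~\eqref{eq:convergence_bound_stieltjes}.

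I expect the conceptual content to be light and the delicate points to be purely bookkeeping: making sure the CG contraction factor is controlled by $\kappa(A)$ \emph{uniformly over all shifts} $t\in[0,\infty)$ — which is where the monotonicity $\kappa_t\le\kappa$ enters — and handling the shift-dependent energy norms so that precisely the weight $(\lmin+t)^{-2}$ (and not some other power) survives in the final scalar integral, since it is exactly this weight that makes the integral collapse to $|f'(\lmin)|$.
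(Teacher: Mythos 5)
Your proposal is correct and follows essentially the same route as the paper's proof: the same telescoped splitting of the integrand (up to which factor carries the error term), the same identification of the Krylov iterates with CG iterates for the shifted systems, the same energy-norm/Euclidean-norm conversions producing the weight $(\lmin+t)^{-2}$, the same monotonicity argument for the contraction factor, and the same final identification of the integral with $|f'(\lmin)|$. The only (welcome) refinements are that you make the sign in the Stieltjes integral representation of $L_f$ explicit and spell out why the shift-independence of $\spK_m(A,\vy)$ lets the single Krylov basis serve all shifts $t$ simultaneously.
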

\begin{proof}
Subtracting the integral representations~\eqref{eq:frechet_derivative_integral_compact} and~\eqref{eq:arnoldi_approximation_frechet}---modified to account for the fact that $f$ is a Stieltjes function---gives
\begin{eqnarray*}
L_f(A,\eta\vy\vz^H)-L_m^\Arn &=& \eta\int_0^\infty \vx(t)\vu(t)^H - \vx_m(t)\vu_m(t)^H \dmu\\
&=& \eta\int_0^\infty \vx(t)\big(\vu(t)-\vu_m(t)\big)^H \!\!+ \big(\vx(t)-\vx_m(t)\big)\vu_m(t)^H \dmu
\end{eqnarray*}
Taking the Euclidean norm on both sides then allows to estimate
\begin{equation}\label{eq:stieltjes_convergence_proof1}
\|L_f(A,\eta\vy\vz^H)-L_m^\Arn\| \leq |\eta|\int_0^\infty \|\vx(t)\|\|\vu(t)-\vu_m(t)\| + \|\vu_m(t)\|\|\vx(t)-\vx_m(t)\| \dmu.
\end{equation}
We proceed by looking at the integrand
\begin{equation}\label{eq:stieltjes_convergence_proof2}\nonumber
\|\vx(t)\|\|\vu(t)-\vu_m(t)\| + \|\vu_m(t)\|\|\vx(t)-\vx_m(t)\|,
\end{equation}
for fixed $t \geq 0$. In order to be able to use the conjugate gradient convergence result from Theorem~\ref{the:cg_convergence} to bound the right-hand side of~\eqref{eq:stieltjes_convergence_proof1}, we bound the Euclidean norm by the energy norm induced by the shifted matrix $A+tI$, using the relation
\begin{equation}\label{eq:norm_equivalence}\nonumber
\|\vv\| \leq \frac{1}{\sqrt{\lmin+t}}\|\vv\|_{A+tI}.
\end{equation}
From this, we obtain
$$\|\vx(t)-\vx_m(t)\| \leq \frac{2}{\sqrt{\lmin+t}}\left(\frac{\sqrt{\kappa(t)}-1}{\sqrt{\kappa(t)}+1}\right)^m\|\vx(t) - \vx_0(t)\|_{A+tI}$$
where $\kappa(t)$ denotes the Euclidean norm condition number of $A+tI$. As the iterates $\vx_m(t)$ correspond to choosing an initial guess $\vx_0(t) = \vnull$, we have
$$\|\vx(t) - \vx_0(t)\|_{A+tI} = \|\vx(t)\|_{A+tI} \leq \frac{1}{\sqrt{\lmin+t}},$$
and the same estimates can obviously be performed for the term $\|\vu(t)-\vu_0(t)\|$.

Further estimating $\|\vx(t)\| \leq \frac{1}{\lmin+t}$ and $\|\vu_m(t)\| \leq \frac{1}{\lmin+t}$, we obtain
\begin{equation}\label{eq:stieltjes_convergence_proof3}
\|\vx(t)\|\|\vu(t)-\vu_m(t)\| + \|\vu_m(t)\|\|\vx(t)-\vx_m(t)\| \leq \frac{4}{(\lmin+t)^2}\left(\frac{\sqrt{\kappa(t)}-1}{\sqrt{\kappa(t)}+1}\right)^m.
\end{equation}
Inserting~\eqref{eq:stieltjes_convergence_proof3} into~\eqref{eq:stieltjes_convergence_proof1}, we find
\begin{equation}\label{eq:stieltjes_convergence_proof4}\nonumber
\|L_f(A,\eta\vy\vz^H)-L_m^\Arn\| \leq |\eta|\int_0^\infty  \frac{4}{(\lmin+t)^2}\left(\frac{\sqrt{\kappa(t)}-1}{\sqrt{\kappa(t)}+1}\right)^m \dmu.
\end{equation}
Using the fact that $(\sqrt{\kappa(t)}-1)/(\sqrt{\kappa(t)}+1)$ is monotonically decreasing in $t$ and noting that
\begin{equation}\label{eq:stieltjes_derivative}\nonumber
f^\prime(z) = -\int_0^\infty \frac{1}{(t+z)^2} \dmu,
\end{equation}
see, e.g.,~\cite{AlzerBerg2002}, we obtain the desired result.
\end{proof}

\begin{remark}\label{rem:relation_to_kressner}
Let us note that it would be possible to obtain a result similar to that of Theorem~\ref{thm:apriori} from the work by Kressner~\cite{Kressner2019} and Crouzeix and Kressner~\cite{CrouzeixKressner2020}, but that this is indeed not easily possible for the case of Stieltjes functions treated in Theorem~\ref{the:convergence_stieltjes} as this would require a polynomial approximation result for derivatives of Stieltjes functions, which to our knowledge is not readily available in the literature.\hfill$\diamond$
\end{remark}

\begin{example}\label{ex:invsqrt_apriori}
 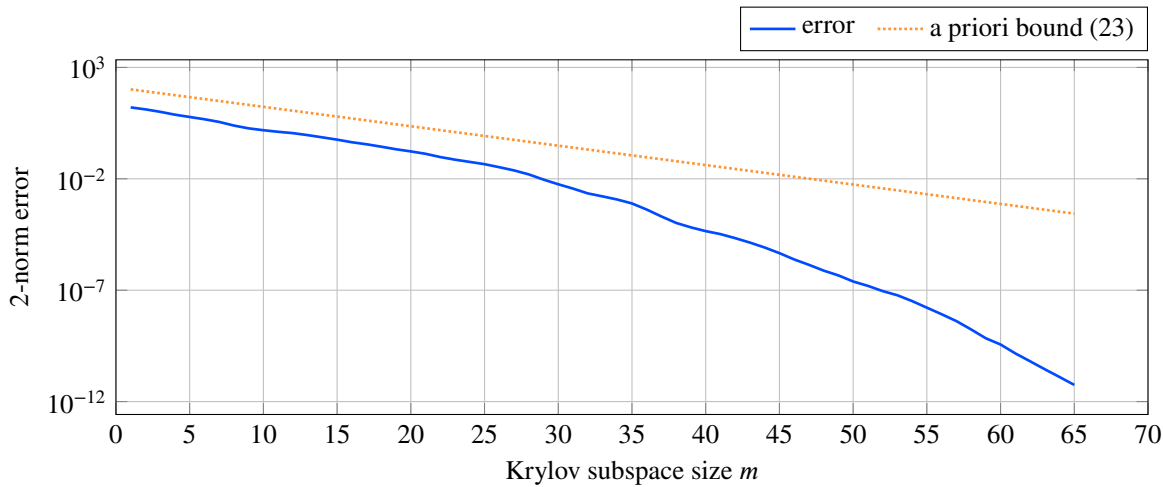
\begin{figure}
 \centering
\tikzsetnextfilename{apriori_invsqrt}
\pgfplotsset{height=0.35\linewidth,width=0.85\linewidth,compat=1.10,every axis/.append style={legend style={/tikz/every even column/.append style={column sep=6pt}}}}
\pgfplotscreateplotcyclelist{list_apri}{%
color_peter1,line width=1pt, mark=none, solid\\
color_peter3,line width=1pt, mark=none, densely dotted\\
}
\noindent%
\begin{tikzpicture}[scale=1]%
    \begin{semilogyaxis}[legend style={at={(1,1.15)}}, 
   	anchor= north east, legend columns=5,cycle list name=list_apri, 
   	xmin=0, xmax=70,grid=major, 
   	xlabel={Krylov subspace size $m$}, ylabel={2-norm error}]
		\addplot+[] 
			table[header=false, x expr=\lineno+1, y index={0}, row sep=\\]
		   {1.6044e+01\\ 1.3096e+01\\ 1.0164e+01\\ 7.5627e+00\\ 5.9632e+00\\ 
		   	4.7109e+00\\ 3.5769e+00\\ 2.4568e+00\\ 1.8382e+00\\ 1.5181e+00\\ 
		   	1.2882e+00\\ 1.1227e+00\\ 9.1526e-01\\ 7.2532e-01\\ 5.7283e-01\\ 
		   	4.3769e-01\\ 3.5475e-01\\ 2.7620e-01\\ 2.1168e-01\\ 1.6995e-01\\ 
		   	1.3264e-01\\ 9.4642e-02\\ 7.2317e-02\\ 5.7339e-02\\ 4.5401e-02\\ 
		   	3.3166e-02\\ 2.3847e-02\\ 1.5799e-02\\ 9.2018e-03\\ 5.6975e-03\\ 
		   	3.6427e-03\\ 2.2179e-03\\ 1.6274e-03\\ 1.1775e-03\\ 7.8311e-04\\ 
		   	4.1674e-04\\ 2.0364e-04\\ 1.0450e-04\\ 6.6016e-05\\ 4.4908e-05\\ 
		   	3.2954e-05\\ 2.1697e-05\\ 1.3733e-05\\ 8.2474e-06\\ 4.6140e-06\\ 
		   	2.4059e-06\\ 1.3798e-06\\ 7.5967e-07\\ 4.6182e-07\\ 2.4670e-07\\ 
		   	1.5673e-07\\ 9.1481e-08\\ 5.9349e-08\\ 3.2524e-08\\ 1.6489e-08\\ 
		   	8.3656e-09\\ 4.1213e-09\\ 1.7369e-09\\ 6.9614e-10\\ 3.6510e-10\\ 
		   	1.4597e-10\\ 6.4778e-11\\ 2.8298e-11\\ 1.2701e-11\\ 5.6105e-12\\ };
	\addlegendentry{error}
		\addplot+[] 
			table[header=false, x expr=\lineno+1, y index={0}, row sep=\\]
		   {1.0349e+02\\ 8.4676e+01\\ 6.9280e+01\\ 5.6684e+01\\ 4.6378e+01\\ 
		   	3.7945e+01\\ 3.1046e+01\\ 2.5401e+01\\ 2.0783e+01\\ 1.7004e+01\\ 
		   	1.3913e+01\\ 1.1383e+01\\ 9.3134e+00\\ 7.6200e+00\\ 6.2346e+00\\ 
		   	5.1010e+00\\ 4.1736e+00\\ 3.4147e+00\\ 2.7939e+00\\ 2.2859e+00\\ 
		   	1.8703e+00\\ 1.5302e+00\\ 1.2520e+00\\ 1.0244e+00\\ 8.3812e-01\\ 
		   	6.8573e-01\\ 5.6105e-01\\ 4.5904e-01\\ 3.7558e-01\\ 3.0729e-01\\ 
		   	2.5142e-01\\ 2.0571e-01\\ 1.6831e-01\\ 1.3771e-01\\ 1.1267e-01\\ 
		   	9.2184e-02\\ 7.5423e-02\\ 6.1710e-02\\ 5.0490e-02\\ 4.1310e-02\\ 
		   	3.3799e-02\\ 2.7654e-02\\ 2.2626e-02\\ 1.8512e-02\\ 1.5146e-02\\ 
		   	1.2392e-02\\ 1.0139e-02\\ 8.2957e-03\\ 6.7874e-03\\ 5.5533e-03\\ 
		   	4.5436e-03\\ 3.7175e-03\\ 3.0416e-03\\ 2.4886e-03\\ 2.0361e-03\\ 
		   	1.6659e-03\\ 1.3630e-03\\ 1.1152e-03\\ 9.1243e-04\\ 7.4653e-04\\ 
		   	6.1080e-04\\ 4.9975e-04\\ 4.0888e-04\\ 3.3454e-04\\ 2.7371e-04\\ };
	\addlegendentry{a priori bound \eqref{eq:apriori_bounds}}
    \end{semilogyaxis}
\end{tikzpicture}
 \caption{Convergence v.s.\ a priori bound \eqref{eq:apriori_bounds}, for Example~\ref{ex:invsqrt_apriori}.}
 \label{fig:apriori_invsqrt}
 \end{figure}
 We shall now illustrate the bound from
 Theorem~\ref{the:convergence_stieltjes} using a small numerical
 experiment. Let $A \in \mathbb{R}^{n \times n}, n = 100$ be a
 diagonal matrix with equidistantly spaced eigenvalues in $[0.1,10]$
 and take $\vy \in \mathbb{R}^n$ and $\vz \in
 \mathbb{R}^n$ at random. Figure~\ref{fig:apriori_invsqrt} shows the convergence
 of the approximation \eqref{eq:arnoldi_approximation_frechet} v.s.\
 the bound given by Theorem~\ref{the:convergence_stieltjes}. At the
 beginning, the convergence slope is captured very accurately, but due
 to the nature of the bound~\eqref{eq:convergence_bound_stieltjes}, it
 cannot predict the superlinear convergence occurring in later
 iterations due to spectral adaption. This is a typical shortcoming of
 many similar bounds for Stieltjes matrix functions.\hfill$\diamond$
\end{example}

\begin{remark}
In the proof of Theorem~\ref{the:convergence_stieltjes}, we have used the simple worst case upper bound for the CG error, as this gives rise to a simple, a priori bound for the error in the approximation of the \Fd. Of course, any other upper bound for the error in the CG method could be used in the same manner, and in particular one can expect superlinear convergence of the approximation $L_m^\Arn$ whenever superlinear convergence occurs for $\vx_m(t)$ and $\vu_m(t)$.\hfill$\diamond$
\end{remark}

To also brief\/ly touch on rational Krylov subspaces, we conclude this section by showing a simple result on the speed of convergence for the extended Krylov case (i.e., a rational Krylov subspace in which the poles $\xi_i$ are alternatingly chosen at $0$ and $\infty$). This result uses a similar approach as the one used in the proof of Theorem~\ref{the:convergence_stieltjes}. More refined results could be obtained by using techniques similar to those applied in~\cite{KnizhnermanSimoncini2011} to the case of the Lyapunov equation, but this is far beyond the scope of this paper.

\begin{theorem}\label{the:convergence_stieltjes_extended}\em 
Let $A \in \Cnn$ be Hermitian positive definite, let $\vy,\vz \in \Cn$ with $\|\vy\| = \|\vz\| = 1$, let $f$ be a Stieltjes function~\eqref{eq:stieltjes_integral} and let the poles in the rational Arnoldi method be chosen as $\xi_{2i-1} = \infty, \xi_{2i} = 0$, $i = 1,\dots,m$. Then the rational Krylov iterates $L_{m}^\Rat$ satisfy
\begin{equation}\label{eq:convergence_bound_stieltjes_extended}
\|L_f(A,\eta\vy\vz^H) - L_{m}^\Rat\| \leq |\eta|f(\lmin)C\left(\frac{\sqrt[4]{\kappa}-1}{\sqrt[4]{\kappa}+1}\right)^m
\end{equation}
where $\lmin$ is the smallest eigenvalue of $A$, $\kappa$ denotes the Euclidean norm condition number of $A$ and $C > 0$ is a constant that is independent of $m$ and $n$.
\end{theorem}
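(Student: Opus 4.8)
The plan is to follow the proof of Theorem~\ref{the:convergence_stieltjes} almost verbatim, replacing the classical CG bound of Theorem~\ref{the:cg_convergence} by a geometric convergence estimate for the \emph{extended} Krylov subspace method applied to the shifted Hermitian positive definite systems $(A+tI)\vx=\vy$ and $(A+tI)\vu=\vz$. First I would subtract the Stieltjes version of the integral representation~\eqref{eq:frechet_derivative_integral_compact} of $L_f(A,\eta\vy\vz^H)$ from the corresponding representation of $L_m^\Rat$, the latter being obtained by replacing $\vx(t),\vu(t)$ with the rational Arnoldi Galerkin approximations $\vx_m(t),\vu_m(t)$ drawn from $\spQ_m(A,\vy)$ and $\spQ_m(A,\vz)$ with the prescribed poles $\xi_{2i-1}=\infty$, $\xi_{2i}=0$. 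Using the identity $\vx\vu^H-\vx_m\vu_m^H=\vx(\vu-\vu_m)^H+(\vx-\vx_m)\vu_m^H$ under the integral and taking Euclidean norms, exactly as in Theorem~\ref{the:convergence_stieltjes}, gives
\begin{equation}\label{eq:extended_proof1}
\|L_f(A,\eta\vy\vz^H)-L_m^\Rat\| \leq |\eta|\int_0^\infty \|\vx(t)\|\,\|\vu(t)-\vu_m(t)\| + \|\vu_m(t)\|\,\|\vx(t)-\vx_m(t)\| \dmu .
\end{equation}

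The crucial step is to bound $\|\vx(t)-\vx_m(t)\|$ (and symmetrically $\|\vu(t)-\vu_m(t)\|$) uniformly in $t\ge 0$. Since $A$ is Hermitian positive definite, $\vx_m(t)$ is the $(A+tI)$-orthogonal projection of $\vx(t)$ onto $\spQ_m(A,\vy)$ and is therefore optimal in the $(A+tI)$-energy norm over that space; estimating this best approximation amounts to a Laurent-polynomial approximation problem for $z\mapsto(t+z)^{-1}$ on the spectral interval $[\lmin,\lmax]$. Invoking the known geometric convergence of the extended Krylov subspace method for Hermitian positive definite systems (see, e.g.,~\cite{DruskinKnizhnerman1998,KnizhnermanSimoncini2010}) one obtains a bound $\|\vx(t)-\vx_m(t)\|_{A+tI}\le C_0\bigl(\frac{\sqrt[4]{\kappa(t)}-1}{\sqrt[4]{\kappa(t)}+1}\bigr)^m\|\vx(t)\|_{A+tI}$ with $\kappa(t)=(\lmax+t)/(\lmin+t)$; since $\kappa(t)$ decreases monotonically in $t$ we have $\kappa(t)\le\kappa$ for all $t\ge 0$, so the rate may be replaced by the $t$-independent quantity $\frac{\sqrt[4]{\kappa}-1}{\sqrt[4]{\kappa}+1}$ (the poles $\{0,\infty\}$ straddle $\spec(A+tI)$ at least as favorably as $\spec(A)$). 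Combining this with $\|\vv\|\le(\lmin+t)^{-1/2}\|\vv\|_{A+tI}$ and $\|\vx(t)\|_{A+tI}=\|\vy\|_{(A+tI)^{-1}}\le(\lmin+t)^{-1/2}$ (using $\|\vy\|=1$) yields the uniform bound $\|\vx(t)-\vx_m(t)\|\le\frac{C_0}{\lmin}\bigl(\frac{\sqrt[4]{\kappa}-1}{\sqrt[4]{\kappa}+1}\bigr)^m$.

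Finally, inserting this together with the elementary bounds $\|\vx(t)\|\le(\lmin+t)^{-1}$ and $\|\vu_m(t)\|\le(\lmin+t)^{-1}$ (the latter because the compressed matrix $\tildeT$ inherits the spectral bounds of $A$) into~\eqref{eq:extended_proof1} bounds the integrand by $\frac{2C_0}{\lmin}(\lmin+t)^{-1}\bigl(\frac{\sqrt[4]{\kappa}-1}{\sqrt[4]{\kappa}+1}\bigr)^m$. Pulling the $t$-independent factor out of the integral and using $\int_0^\infty(\lmin+t)^{-1}\dmu=f(\lmin)$, the scalar version of~\eqref{eq:stieltjes_integral}, gives exactly the claimed bound~\eqref{eq:convergence_bound_stieltjes_extended} with $C=2C_0/\lmin$, which is independent of $m$ and $n$.

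I expect the only genuinely non-routine point to be the uniform-in-$t$ convergence estimate in the second step: we use a \emph{single}, shift-independent space $\spQ_m(A,\cdot)$ to solve the whole family of shifted systems, so one cannot simply quote an off-the-shelf bound for the extended Krylov space tailored to $A+tI$. Intuitively the estimate should still hold because the pole of $(t+z)^{-1}$ at $-t$ is, for every $t\ge 0$, no closer to $[\lmin,\lmax]$ than the pole at $0$ to which the extended space is tuned, so convergence is never worse than for the unshifted system; but converting this observation into the clean geometric bound above, and tracking the dependence of $C_0$ on $\kappa$, is where the real work lies. This is also why the statement only asserts the existence of a constant $C$, and why obtaining a sharper result would require the more delicate analysis in the spirit of~\cite{KnizhnermanSimoncini2011} mentioned before the statement.
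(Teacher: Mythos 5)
Your argument is structurally identical to the paper's proof: same splitting of the integrand, same elementary resolvent bounds, same monotonicity of $\kappa(t)=(\lmax+t)/(\lmin+t)$, and the same final evaluation $\int_0^\infty(\lmin+t)^{-1}\dmu=f(\lmin)$. The one step you flag as ``where the real work lies'' --- a $t$-uniform geometric bound for approximating the whole family of shifted systems from the \emph{single} extended space $\spQ_m(A,\vy)$ --- is exactly the step the paper closes with a citation, and it is worth seeing how: since $g_t(z)=(z+t)^{-1}$ is itself a Stieltjes function of $z$ for every $t\ge 0$, the error $\|\vx(t)-\vx_m^\Rat(t)\|$ is the error of an extended Krylov approximation of $g_t(A)\vy$ built from the extended space of $A$ (not of $A+tI$), so the shifted-space obstruction you worry about never arises; the result of Beckermann and Reichel \cite[Section~6.1]{BR09} then gives directly, in the Euclidean norm, the rate $|\Phi_t(\sqrt{(\lmax+t)(\lmin+t)})|^{-m}=\bigl(\frac{\sqrt[4]{\kappa(t)}-1}{\sqrt[4]{\kappa(t)}+1}\bigr)^m$, with a constant that depends on $t$ only through the spectral interval and is therefore bounded uniformly for $t\in[0,\infty]$. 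Your alternative route via $(A+tI)$-energy-norm Galerkin optimality and Laurent polynomial best approximation leads to the same approximation-theoretic question and, once the Laurent estimate is supplied, actually yields the slightly stronger error factor $C_0(\lmin+t)^{-1}$ (which you then deliberately weaken to $C_0/\lmin$ to produce the stated $f(\lmin)$ rather than an $|f'(\lmin)|$ bound); but as written it asserts rather than proves the key rate, so to make it self-contained you should either quote the Beckermann--Reichel bound as the paper does or carry out the Laurent approximation of $(z+t)^{-1}$ on $[\lmin,\lmax]$ explicitly.
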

\begin{proof}
Similarly to the proof of Theorem~\ref{the:convergence_stieltjes}, we arrive at
\begin{equation}
\|L_f(A,\eta\vy\vz^H)-L_{m}^\Rat\| \leq |\eta|\int_0^\infty \|\vx(t)\|\|\vu(t)-\vu_{m}^\Rat(t)\| + \|\vu_{m}(t)\|\|\vx(t)-\vx_{m}^\Rat(t)\| \dmu \label{eq:stieltjes_convergence_extended_proof1}
\end{equation}
where now, $\vx_{m}^\Rat(t)$ and $\vu_{m}^\Rat(t)$ denote the rational Arnoldi approximations for the solutions of the shifted linear systems~\eqref{eq:shifted_systems_solutions}. We again have the estimates
$\|\vx(t)\| \leq \frac{1}{\lmin+t}$ and $\|\vu_{m}(t)\| \leq \frac{1}{\lmin+t}$. In addition, by using the fact that the resolvent is a Stieltjes function, we can employ a result of Beckermann and Reichel~\cite[Section~6.1]{BR09} to estimate
\begin{equation}\label{eq:knizhnerman_simoncini_bound}
\|\vx(t)-\vx_{m}^\Rat(t)\|, \|\vu(t)-\vu_{m}^\Rat(t)\| \leq \frac{C}{|\Phi_t(\sqrt{(\lmax+t)(\lmin+t)})|^m}
\end{equation}
where $C > 0$ is a constant that is independent of $t, m$ and $n$\footnote{We remark that the constant in the result of~\cite{BR09} does indeed depend on the spectral interval of the matrix $A+tI$ and thus on $t$, but as it is bounded from above for $t \in [0,\infty]$, we can replace it by a constant that is independent of $t$.} and $\Phi$ is the scaled inverse Zhukovsky function 
$$\Phi_t(z) = \frac{z-\gamma(t)}{\delta} + \sqrt{\left(\frac{z-\gamma(t)}{\delta}\right)^2-1}$$
with
$$\gamma(t) = \frac{\lmin+\lmax+2t}{2} \text{ and } \delta = \frac{\lmax-\lmin}{2}.$$

Inserting~\eqref{eq:knizhnerman_simoncini_bound} together with the straight-forward estimates into~\eqref{eq:stieltjes_convergence_extended_proof1}, we find
\begin{equation}\label{eq:stieltjes_convergence_extended_proof2}
\|L_f(A,\eta\vy\vz^H)-L_{m}^\Rat\| \leq C|\eta|\int_0^\infty  \frac{1}{(\lmin+t)\cdot|\Phi_t(\sqrt{(\lmax+t)(\lmin+t)})|^m} \dmu.
\end{equation}
Now, we have that 
\begin{equation}\label{eq:zhukovsky_value}\nonumber
|\Phi_t(\sqrt{(\lmax+t)(\lmin+t)})| = |\zeta(t) + \sqrt{\zeta(t)^2-1}| \text{ with }\zeta(t) = \frac{\sqrt{\kappa(t)} + 1}{\sqrt{\kappa(t)}-1},
\end{equation}
which, after standard algebraic manipulations, yields
\begin{equation}\label{eq:zhukovsky_value2}
|\Phi_t(\sqrt{(\lmax+t)(\lmin+t)}|^{-m} = \left(\frac{\sqrt[4]{\kappa(t)}-1}{\sqrt[4]{\kappa(t)}+1}\right)^m.
\end{equation}
The right-hand side of~\eqref{eq:zhukovsky_value2} is clearly monotonically decreasing in $t$, so that we can bound it by the value
\begin{equation}\label{eq:zhukovsky_value3}
|\Phi_t(\sqrt{\lmax\lmin})|^{-m} = \left(\frac{\sqrt[4]{\kappa}-1}{\sqrt[4]{\kappa}+1}\right)^m.
\end{equation}
Inserting~\eqref{eq:zhukovsky_value3} into~\eqref{eq:stieltjes_convergence_extended_proof2} concludes the proof of the theorem. 
\end{proof}

 \begin{figure}
 \centering
\tikzsetnextfilename{apriori_invsqrt_extended}
\pgfplotsset{height=0.35\linewidth,width=0.85\linewidth,compat=1.10,every axis/.append style={legend style={/tikz/every even column/.append style={column sep=6pt}}}}
\pgfplotscreateplotcyclelist{list_apri}{%
color_peter1,line width=1pt, mark=none, solid\\
color_peter3,line width=1pt, mark=none, densely dotted\\
}
\noindent%
\begin{tikzpicture}[scale=1]%
    \begin{semilogyaxis}[legend style={at={(1,1.15)}}, 
   	anchor= north east, legend columns=5,cycle list name=list_apri, 
   	xmin=0, xmax=25,grid=major, 
   	xlabel={Extended Krylov subspace size $m$.}, ylabel={2-norm error}]
		\addplot+[] 
			table[header=false, x expr=2*(\lineno+1), y index={0}, row sep=\\]
		   { 4.6603e+00 \\ 9.8293e-01 \\ 1.3414e-01 \\ 1.7061e-02 \\ 2.1484e-03 \\ 2.7910e-04 \\ 3.2852e-05 \\ 4.3745e-06 \\ 4.8511e-07 \\ 4.6971e-08 \\ 4.2063e-09 \\ 3.4299e-10 \\};
	\addlegendentry{error}
		\addplot+[] 
			table[header=false, x expr=2*(\lineno+1), y index={0}, row sep=\\]
		   {5.3975e+00 \\ 1.4566e+00 \\ 3.9311e-01 \\ 1.0609e-01 \\ 2.8631e-02 \\ 7.7267e-03 \\ 2.0852e-03 \\ 5.6275e-04 \\ 1.5187e-04 \\ 4.0986e-05 \\ 1.1061e-05 \\ 2.9851e-06 \\};
	\addlegendentry{slope of a priori bound \eqref{eq:convergence_bound_stieltjes_extended}}
    \end{semilogyaxis}
\end{tikzpicture}
 \caption{Convergence v.s.\ a priori bound \eqref{eq:convergence_bound_stieltjes_extended}, for Example~\ref{ex:invsqrt_apriori_extended}.}
 \label{fig:apriori_invsqrt_extended}
 \end{figure}
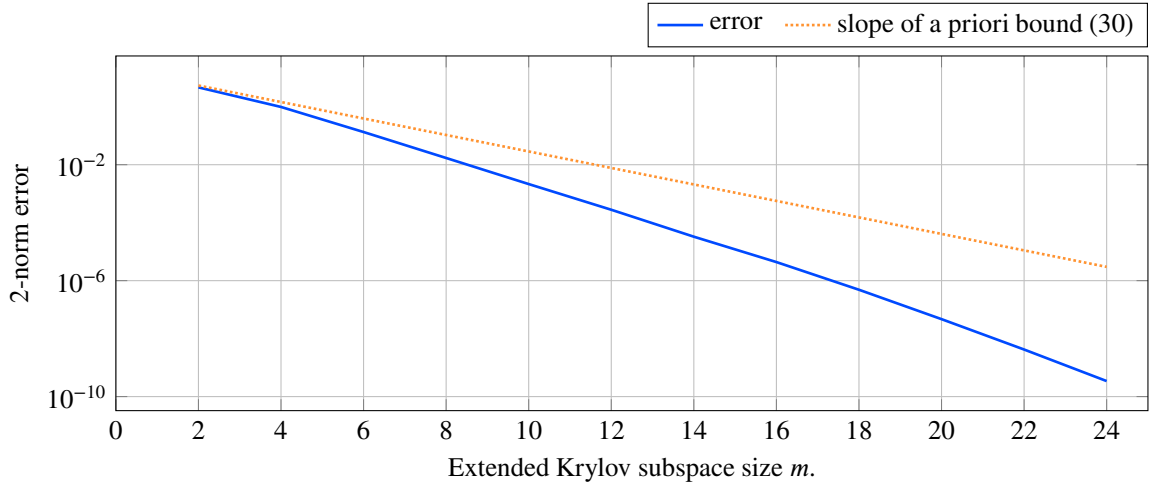

\begin{example}\label{ex:invsqrt_apriori_extended}
We now illustrate the bound from Theorem~\ref{the:convergence_stieltjes_extended} by a small numerical experiment, using the same setup as in Example~\ref{ex:invsqrt_apriori}. Figure~\ref{fig:apriori_invsqrt_extended} shows the convergence curve of the approximation \eqref{eq:rational_approximation_frechet} (with poles $\xi_{2i-1} = \infty, \xi_{2i} = 0$, $i = 1,\dots,m$) together with the bound from Theorem~\ref{the:convergence_stieltjes}. Note that we only give the slope of the bound, as the constant in~\eqref{eq:convergence_bound_stieltjes_extended} is not explicitly known. We observe that our a priori bound slightly overestimates the slope of the error norm reduction, but not by as much as in Example~\ref{ex:invsqrt_apriori}, as no superlinear convergence effects take place. \hfill$\diamond$
\end{example}

\section{The special case of the matrix logarithm}\label{sec:logarithm}
Another matrix function of interest, which does not fit into the framework considered so far, is the matrix logarithm $\log(A)$. While the logarithm cannot be represented by the Cauchy integral formula, we have the representation
\begin{equation}\label{eq:log_integral}\nonumber
\log(A) = \int_0^1 (A-I)\big(t(A-I)+I\big)^{-1}\d t,
\end{equation}
which holds for any $A$ having no eigenvalues on $\R^-$
(see e.g.~\cite{Higham2008}).
From this representation, we find an integral
representation of the \Fd\ as
\begin{equation}\label{eq:log_frechet_integral}\nonumber
L_{\log}(A,\eta\vy\vz^H) = \eta\int_0^1 \big(t(A-I)+I\big)^{-1}\vy\vz^H\big(t(A-I)+I\big)^{-1}\d t.
\end{equation}
Similar to what we outlined in section~\ref{sec:frechet_approx}, this is again the integral over outer products of solutions of two families of parameterized linear systems, i.e.,
\begin{equation}\label{eq:log_frechet_integral_compact}\nonumber
	L_{\log}(A,\eta\vy\vz^H) = \eta\int_0^1 \bar{\vx}(t)\bar{\vu}(t)^H \d t.
\end{equation}
where
\begin{equation}\label{eq:shifted_systems_solutions_log}\nonumber
\bar{\vx}(t) = \big(t(A-I)+I\big)^{-1}\vy \quad \text{and} \quad \bar{\vu}t) =\big(t(A-I)-I)\big)^{-H}\vz.
\end{equation}
Replacing $\bar{\vx}(t)$ and $\bar{\vu}(t)$ by their Arnoldi approximations
\begin{equation}\label{eq:arnoldi_approx_linearsystem_log}\nonumber
	\bar{\vx}_m^\Arn(t) = \|\vy\|V_m \big(t(G_m-I)+I\big)^{-1}\ve_1 \quad \text{and} \quad\bar{\vu}_m^\Arn(t) = \|\vz\|W_m \big(t(H_m-I)+I\big)^{-1}\ve_1
\end{equation}
then directly gives an Arnoldi approximation for the \Fd\ of the logarithm via
\begin{equation}
	\bar{L}_m^\Arn := \eta V_m \int_0^1\|\vy\|\|\vz\|\big(t(G_m-I)+I\big)^{-1}\ve_1\ve_1^H\big(t(H_m^H-I)+I\big)^{-1} \d t W_m^H =: V_m\bar{X}_mW_m^H.\label{eq:arnoldi_approx_log}
\end{equation}
A statement analogous to that of Lemma~\ref{lem:arnoldi_approx_block}
holds for the integral in~\eqref{eq:arnoldi_approx_log}, that is,
\begin{equation}\label{eq:block_log}
\log\left(\left[\begin{array}{cc}G_m & -\eta \|\vy\| \|\vz\| \ve_1\ve_1^H \\ 0 & H_m^H \end{array}\right]\right) = \left[\begin{array}{cc} \log(G_m) & \bar{X}_m \\ 0 & \log(H_m^H) \end{array}\right].
\end{equation}
When $A$ is Hermitian and $\vy = \vz$, we have (using the notation from section~\ref{subsec:lanczos})
\begin{equation}\label{eq:lanczos_approx_log}\nonumber
\bar{L}_m^\Lan := \eta V_m L_{\log}(T_m, \eta\|\vy\|^2 \ve_1\ve_1)V_m^H.
\end{equation}
Furthermore, in the Hermitian positive definite case, we can derive a convergence result for the logarithm which is very similar to the one for Stieltjes functions given in Theorem~\ref{the:convergence_stieltjes}.

\begin{theorem}\label{the:convergence_logarithm}\em
Let $A \in \Cnn$ be Hermitian positive definite and let $\vy,\vz \in \Cn$ with $\|\vy\| = \|\vz\| = 1$. Then the approximations $\Lhat_m^\Arn$ defined in~\eqref{eq:arnoldi_approx_log} satisfy
\begin{equation}\label{eq:convergence_bound_logarithm}\nonumber
	\|L_{\log}(A,\eta\vy\vz^H) - \bar{L}_m^\Arn\| \leq \frac{4 \eta}{\lmin} \left(\frac{\sqrt{\kappa}-1}{\sqrt{\kappa}+1}\right)^m,
\end{equation}
where $\lmin$ is the smallest eigenvalue of $A$ and $\kappa$ denotes the Euclidean norm condition number of $A$.
\end{theorem}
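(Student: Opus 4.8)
The plan is to mimic the proof of Theorem~\ref{the:convergence_stieltjes} almost verbatim, with the resolvent $(A+tI)^{-1}$ replaced by the inverse of the shifted-and-scaled matrix $M(t):=I+t(A-I)=(1-t)I+tA$ and the Stieltjes measure on $[0,\infty)$ replaced by Lebesgue measure on $[0,1]$. Subtracting the integral representation of $L_{\log}(A,\eta\vy\vz^H)$ from~\eqref{eq:arnoldi_approx_log}, rewriting the difference of integrands as $\bar{\vx}(t)\bar{\vu}(t)^H-\bar{\vx}_m(t)\bar{\vu}_m(t)^H = \bar{\vx}(t)\big(\bar{\vu}(t)-\bar{\vu}_m(t)\big)^H + \big(\bar{\vx}(t)-\bar{\vx}_m(t)\big)\bar{\vu}_m(t)^H$, and taking Euclidean norms yields
\begin{equation*}
\|L_{\log}(A,\eta\vy\vz^H)-\bar{L}_m^\Arn\| \leq |\eta|\int_0^1 \|\bar{\vx}(t)\|\,\|\bar{\vu}(t)-\bar{\vu}_m(t)\| + \|\bar{\vu}_m(t)\|\,\|\bar{\vx}(t)-\bar{\vx}_m(t)\| \d t.
\end{equation*}

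Next, for fixed $t\in[0,1]$ I would note that $M(t)$ is Hermitian positive definite with eigenvalues $1+t(\lambda-1)$, $\lambda\in[\lmin,\lmax]$, so its smallest eigenvalue is $1-t+t\lmin$ and its condition number is $\kappa(t)=\frac{1-t+t\lmax}{1-t+t\lmin}$. A one-line manipulation (equivalent to $(1-t)\lmin\le(1-t)\lmax$) gives $\kappa(t)\le\kappa$ for every $t\in[0,1]$, so the CG convergence factor satisfies $\frac{\sqrt{\kappa(t)}-1}{\sqrt{\kappa(t)}+1}\le\frac{\sqrt{\kappa}-1}{\sqrt{\kappa}+1}$. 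Since $M(t)=(1-t)I+tA$ generates the same Krylov subspaces as $A$, the Lanczos/Arnoldi approximations $\bar{\vx}_m(t)$ and $\bar{\vu}_m(t)$ in~\eqref{eq:arnoldi_approx_log} coincide with the CG iterates (with zero initial guess) for the systems $M(t)\bar{\vx}=\vy$ and $M(t)\bar{\vu}=\vz$. Combining Theorem~\ref{the:cg_convergence} with the norm equivalence $\|\vv\|\le(1-t+t\lmin)^{-1/2}\|\vv\|_{M(t)}$ and the estimate $\|\bar{\vx}(t)\|_{M(t)}=\sqrt{\vy^HM(t)^{-1}\vy}\le(1-t+t\lmin)^{-1/2}$ (here using $\|\vy\|=1$), and likewise for $\bar{\vu}(t)$, one obtains $\|\bar{\vx}(t)-\bar{\vx}_m(t)\|,\,\|\bar{\vu}(t)-\bar{\vu}_m(t)\|\le\frac{2}{1-t+t\lmin}\big(\frac{\sqrt{\kappa}-1}{\sqrt{\kappa}+1}\big)^m$. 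Together with $\|\bar{\vx}(t)\|\le(1-t+t\lmin)^{-1}$ and $\|\bar{\vu}_m(t)\|\le(1-t+t\lmin)^{-1}$ (the latter via the Pythagorean identity $\|\bar{\vu}_m(t)\|_{M(t)}\le\|\bar{\vu}(t)\|_{M(t)}$ for CG iterates with zero start), the integrand above is bounded by $\frac{4}{(1-t+t\lmin)^2}\big(\frac{\sqrt{\kappa}-1}{\sqrt{\kappa}+1}\big)^m$.

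Finally, I would evaluate the elementary integral $\int_0^1(1-t+t\lmin)^{-2}\d t=\frac{1}{\lmin}$ (treating $\lmin=1$ separately or by continuity), which is nothing but $\log'(\lmin)$ and thus plays the exact role that $-f'(\lmin)$ played in Theorem~\ref{the:convergence_stieltjes}; inserting it gives $\|L_{\log}(A,\eta\vy\vz^H)-\bar{L}_m^\Arn\|\le\frac{4|\eta|}{\lmin}\big(\frac{\sqrt{\kappa}-1}{\sqrt{\kappa}+1}\big)^m$, as claimed. None of the steps is a genuine obstacle given Theorem~\ref{the:convergence_stieltjes}; the only mildly delicate points are the uniform-in-$t$ estimate $\kappa(t)\le\kappa$ for the family $M(t)$ and the closed-form value of the scalar integral, and---as in the Stieltjes case---the use of the Pythagorean identity to control $\|\bar{\vu}_m(t)\|$.
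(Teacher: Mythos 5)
Your proposal is correct and follows essentially the same route as the paper's proof: the same splitting of the integrand, the same identification of the spectrum and condition number of $t(A-I)+I$, the same CG/energy-norm estimates, the bound $\bar\kappa(t)\le\kappa$ on $[0,1]$ (which you get by direct algebra and the paper gets by monotonicity of $\bar\kappa$), and the same closed-form integral $\int_0^1 (t(\lmin-1)+1)^{-2}\,\mathrm{d}t = 1/\lmin$. If anything, your write-up is slightly more careful on the points the paper leaves implicit (the Galerkin/Pythagorean control of $\|\bar{\vu}_m(t)\|$ and the shift-invariance of the Krylov spaces).
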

\begin{proof}
Proceeding analogously to the proof of Theorem~\ref{the:convergence_stieltjes}, we obtain
\begin{equation}\label{eq:log_convergence_proof1}\nonumber
	\|L_{\log}(A,\eta\vy\vz^H)-\bar{L}^\Arn_m\| \leq |\eta|\int_0^1 \|\bar{\vx}(t)\|\|\bar{\vu}(t)-\bar{\vu}_m^\Arn(t)\| + \|\bar{\vu}_m(t)^\Arn\|\|\bar{\vx}(t)-\bar{\vx}_m^\Arn(t)\| \d t.
\end{equation}
The smallest and largest eigenvalue and condition number of the matrices $\bar{A}(t) := t(A-I)+I$ are given by
$$\bar{\lambda}_{\min}(t) = t(\lmin-1)+1,\quad \bar{\lambda}_{\max}(t) = t(\lmax-1)+1,\quad \bar{\kappa}(t) = \frac{t(\lmax-1)+1}{t(\lmin-1)+1},$$
where $\lmin$ and $\lmax$ are the largest and smallest eigenvalue of $A$.

Using similar arguments as in the proof of Theorem~\ref{the:convergence_stieltjes} together with the relation
\begin{equation}\label{eq:log_norm_equivalence}\nonumber
\|\vv\| \leq \frac{1}{\sqrt{t(\lmin-1)+1}}\|\vv\|_{\widehat{A}(t)},
\end{equation}
we obtain
$$\|\bar{\vx}(t)-\bar{\vx}_m^\Arn(t)\| \leq \frac{2}{\sqrt{t(\lmin-1)+1}}\left(\frac{\sqrt{\bar{\kappa}(t)}-1}{\sqrt{\bar{\kappa}(t)}+1}\right)^m\|\bar{\vx}(t)\|_{\widehat{A}(t)}.$$
Using the estimates
$$\|\bar{\vx}(t)\|_{\bar{A}(t)} \leq \frac{1}{\sqrt{t(\lmin-1)+1}}, \quad \text{and} \quad \|\bar{\vx}(t)\| \leq \frac{1}{t(\lmin-1)},$$
(and analogous versions for $\bar{\vu}(t), \bar{\vu}_m^\Arn(t)$), we finally obtain
\begin{equation}\label{eq:log_convergence_proof3}\nonumber
	\|\vx(t)\|\|\vu(t)-\vu_m^\Arn(t)\| + \|\vu_m^\Arn(t)\|\|\vx(t)-\vx_m^\Arn(t)\| \leq \frac{4}{(t(\lmin-1)+1)^2}\left(\frac{\sqrt{\bar{\kappa}(t)}-1}{\sqrt{\bar{\kappa}(t)}+1}\right)^m.
\end{equation}
Combining this with~\eqref{eq:stieltjes_convergence_proof1}, we find
\begin{equation}\label{eq:log_convergence_proof4}\nonumber
	\|L_{\log}(A,\eta\vy\vz^H)-\bar{L}_m^\Arn\| \leq |\eta|\int_0^1  \frac{4}{((t(\lmin-1)+1)^2}\left(\frac{\sqrt{\bar{\kappa}(t)}-1}{\sqrt{\bar{\kappa}(t)}+1}\right)^m \d t.
\end{equation}
Now since $\bar{\kappa}(t)$ is monotonically increasing on $[0,1]$ we know that $\bar{\kappa}(1) = \kappa$ which, when combined with
$$\int_0^1 \frac{1}{(t(\lmin-1)+1)^2} \d t= \frac{1}{\lmin},$$
gives us the desired error bound.
\end{proof}

\section{A posteriori error estimates}\label{sec:posteriori}

In this section we derive a heuristic a posteriori error estimate for the Arnoldi approximation \eqref{eq:arnoldi_approximation_frechet}.
First, consider the Krylov subspace $\spK_m(A,\vy)$ and the basis matrix $V_m$ and the Hessenberg matrix $G_m$ given by the Arnoldi iteration, satisfying the relation \eqref{eq:arnoldi_relation1}.
A commonly used a posteriori error estimate (see e.g.~\cite[Sec.\;5.2]{Saad1992}) for the Arnoldi approximation of the matrix exponential is given by
$$
\| \exp(t A) \vy - V_m \exp(t G_m) \ve_1 \| \vy \|  \| \approx g_{m+1,1} \ve_m^H \varphi_1(t G_m) \ve_1 \| \vy \|,
$$
where $\varphi_1(z) = (e^z - 1)/z$. Using the Cauchy integral formula for $\varphi_1(z)$ 
(see~\cite[Thm.\;5.1]{SchmelzerTrefethen2007}), and choosing a contour $\Gamma$ 
which encircles $\{0\} \cup \spec(G_m)$, this estimate can be written as
\begin{equation} \label{eq:apost0}
	g_{m+1,1} \ve_m^H \varphi_1(t G_m) \ve_1 \| \vy \| =  \| \vy \| \int_\Gamma e^{t\lambda} \, \frac{g_{m+1,1}}{\lambda}  \, \ve_m^H  (\lambda I - G_m)^{-1} \ve_1 \d \lambda.
\end{equation}
Next, we consider an analytic function $f$ and the Krylov subspaces $\spK_m(A,\vy)$ and $\spK_m(A^H\!,\vz)$ and the bases
$V_m$ and $W_m$ and the corresponding Hessenberg matrices $G_m$ and $H_m$ satisfying the relations 
\eqref{eq:arnoldi_relation1} and \eqref{eq:arnoldi_relation2}.
Motivated by \eqref{eq:apost0}, we estimate the error of the Arnoldi approximation \eqref{eq:arnoldi_approximation_frechet}
of the \Fd\ $L_f(A, \eta \vy \vz^H)$ by a heuristic estimate
\begin{equation} \label{eq:apost1}\nonumber
\|L_f(A, \eta \vy \vz^H) - L_m^\Arn\| \approx  \eta \,  \| \vy \|\| \vz \|   
\int_\Gamma f(\lambda) \frac{g_{m+1,m} h_{m+1,m}}{\lambda^2}  \, \ve_m^H  (\lambda I - G_m)^{-1}  \ve_1 \ve_1^H (\lambda I - H_m^H)^{-1}   \ve_1 \d \lambda.
\end{equation}
This estimate can be evaluated using a $4 \times 4$-block matrix, because it follows from block Gaussian elimination that
$$
\lambda^{-2}(\lambda I - G_m)^{-1}  \ve_1 \ve_1^H (\lambda I - H_m^H)^{-1} = (\lambda I - \widetilde{G}_m)^{-1}_{1:m,3m+1:4m},
$$ 
where
$$
\widetilde{G}_m = \begin{bmatrix} G_m & - \ve_1 \ve_1^H & 0 & 0 \\ 0 & H_m^H & I & 0 \\ 0 & 0 & 0 & I \\ 0 & 0 & 0 & 0 \end{bmatrix}.
$$
Thus, 
for an analytic function $f$, we use as an estimate
\begin{equation} \label{eq:apost2}
\|L_f(A, \eta \vy \vz^H) - L_m^\Arn\|  \approx  g_{m+1,m} h_{m+1,m}  \ve_m^H \big( f(\widetilde{G}_m)_{1:m,3m+1:4m} \big) \ve_1 = g_{m+1,m} h_{m+1,m}  f(\widetilde{G}_m)_{m,3m+1}.
\end{equation}
Notice that this approach is not directly applicable to, e.g., the matrix logarithm, as the matrix $\widetilde{G}_m$ is singular.

\begin{figure}
\centering
\tikzsetnextfilename{aposteriori}
\pgfplotsset{height=0.35\linewidth,width=0.85\linewidth,compat=1.10,every axis/.append style={legend style={/tikz/every even column/.append style={column sep=6pt}}}}
\pgfplotscreateplotcyclelist{list_apost}{%
color_peter1,line width=1pt, mark=none, solid\\
color_peter3,line width=1pt, mark=none, densely dotted\\
color_peter2,line width=1pt, mark=none, dashed\\
}
\noindent%
\begin{tikzpicture}[scale=1]%
    \begin{semilogyaxis}[legend style={at={(1,1.15)}}, 
   	anchor= north east, legend columns=5,cycle list name=list_apost, 
   	xmin=0, xmax=50,grid=major, 
   	xlabel={Krylov subspace size $m$}, ylabel={2-norm error}]

\addplot+[]
table [x ={x},y ={error}] {fig/tikz_data/aposteriori.dat};\addlegendentry{error}
 
\addplot+[]
table [x ={x},y ={apost}]{fig/tikz_data/aposteriori.dat} node [pos=0,left] {}; \addlegendentry{estimate~\eqref{eq:apost2}}

\addplot+[]
table [x ={x},y ={diff}]{fig/tikz_data/aposteriori.dat} node [pos=0,left] {}; \addlegendentry{estimate~\eqref{eq:simple_est}}
    \end{semilogyaxis}
\end{tikzpicture}
 \caption{Convergence v.s.\ a the a posteriori estimates \eqref{eq:apost2} and \eqref{eq:simple_est}, for Example~\ref{eg:a_posteriori}.}
 \label{fig:aposteriori}
 \end{figure}
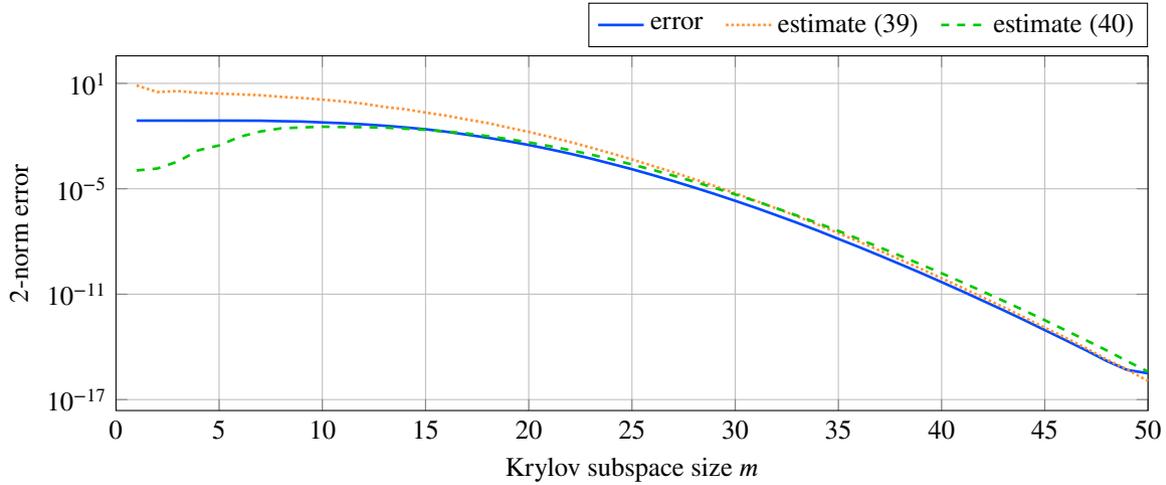

A simple and efficient estimate for the error $\|L_f(A, \eta \vy \vz^H) - L_m^\Arn\|$ can also be obtained by comparing subsequent Krylov subspace approximations.
This means that for $d \in \mathbb{Z}^+$, we estimate
\begin{equation} \label{eq:simple_est}
	\|L_f(A, \eta \vy \vz^H) - L_m^\Arn\| \approx \| L_{m+d}^\Arn  - L_m^\Arn\|.
\end{equation}
The estimate \eqref{eq:simple_est} can be evaluated using small dimensional matrices since
\begin{equation*}
\| L_{m+d}^\Arn - L_m^\Arn \|	=  \eta \cdot \| V_{m+d} X_{m+d} W_{m+d}^H - V_m X_m W_m^H \| =  \eta \cdot \| X_{m+d} - \begin{bmatrix} X_m & 0 \\ 0 & 0 \end{bmatrix} \|;
\end{equation*}
see also~\cite[Section 2.3]{BeckermannKressnerSchweitzer2018} and~\cite[Section 3]{Kressner2019}.

\begin{example}\label{eg:a_posteriori}
 Consider the following simple numerical example to illustrate the estimates \eqref{eq:apost2} and \eqref{eq:simple_est}.
 Set $t=5$, $A = \mathrm{diag}(1,\ -2,\ 1)  +  1.5 \cdot \mathrm{diag}(-1,0,\ 1)  \in \mathbb{R}^{n \times n}$, and take randomly $\vy \in \mathbb{R}^n$
 and $\vz \in \mathbb{R}^n$. Set $n=100$. Figure~\ref{fig:aposteriori} shows the actual convergence of the approximation \eqref{eq:arnoldi_approximation_frechet} 
	and the estimate \eqref{eq:apost2} and the estimate \eqref{eq:simple_est} for $d=1$. For later iterations, both estimates are very accurate, while for early iterations, the estimate~\eqref{eq:apost2} overestimates the actual error norm, while~\eqref{eq:simple_est} underestimates it. In particular in situations where it is crucial to reach a certain accuracy, it is advisable to be careful when using estimate~\eqref{eq:simple_est} as stopping criterion as it might severly underestimate the actual error when convergence is slow.\hfill$\diamond$
\end{example}

\section{Numerical experiments}\label{sec:experiments}
In this section we will compare our algorithms against alternatives in the literature in a number of different scenarios.
All experiments in this section are run on a
Linux machine running MATLAB 2016b.
In order to increase the reliability of the timings we use only a
single core,
and run MATLAB with no GUI (using the \texttt{--nojvm} option).

In our first experiment we compare the different proposed algorithms to each other for two simple model problems. Next, we compare our new algorithms against alternatives when computing $L_f(A,E)\vb$ over a set of
difficult test problems. Finally, we compare our new algorithms to existing alternatives when computing
$L_f(A,E)$ in the context of a physics application:
obtaining the sensitivity of nuclear activation and transmutation
to the system input parameters.


\subsection{Comparison of our methods for simple model problems}

\begin{figure}[t]
\centering
\tikzsetnextfilename{laplace}
\pgfplotsset{height=0.35\linewidth,width=0.85\linewidth,compat=1.10,every axis/.append style={legend style={/tikz/every even column/.append style={column sep=6pt}}}}
\pgfplotscreateplotcyclelist{list_laplace}{%
color_peter1,line width=1pt, mark=none, solid\\
color_peter3,line width=1pt, mark=none, densely dotted\\
}
\noindent%
\begin{tikzpicture}[scale=1]%
    \begin{semilogyaxis}[legend style={at={(1,1.15)}}, 
   	anchor= north east, legend columns=5,cycle list name=list_laplace, 
   	xmin=0, xmax=70,grid=major, 
   	xlabel={Krylov subspace size $m$}, ylabel={2-norm error}]

\addplot[color=ForestGreen,thick]
table [x ={x},y ={err_poly}] {fig/tikz_data/laplace2d.dat};\addlegendentry{Lanczos}
 
  \addplot[color=NavyBlue,thick]
table [x ={x},y ={err_block}]{fig/tikz_data/laplace2d.dat} node [pos=0,left] {}; \addlegendentry{Block}

  \addplot[color=Magenta,thick]
table [x ={x},y ={err_extended}]{fig/tikz_data/laplace2d.dat} node [pos=0,left] {}; \addlegendentry{Extended}

  \addplot[color=ForestGreen,dashed,thick]
table [x ={x},y ={bound_poly}]{fig/tikz_data/laplace2d.dat} node [pos=0,left] {}; \addlegendentry{Slope of~\eqref{eq:convergence_bound_stieltjes}}

  \addplot[color=Magenta,dashed,thick]
table [x ={x},y ={bound_extended}]{fig/tikz_data/laplace2d.dat} node [pos=0,left] {}; \addlegendentry{Slope of~\eqref{eq:convergence_bound_stieltjes_extended}}
    \end{semilogyaxis}
\end{tikzpicture}
\caption{Error norm and error bounds when approximating $L_f(A,E)$ by several of our proposed methods where $A$ is the discretization of the two-dimensional Laplace operator, $f(z) = z^{-1/2}$ and $E$ is a random rank one matrix.}
\label{fig:2dlaplace}
\end{figure}
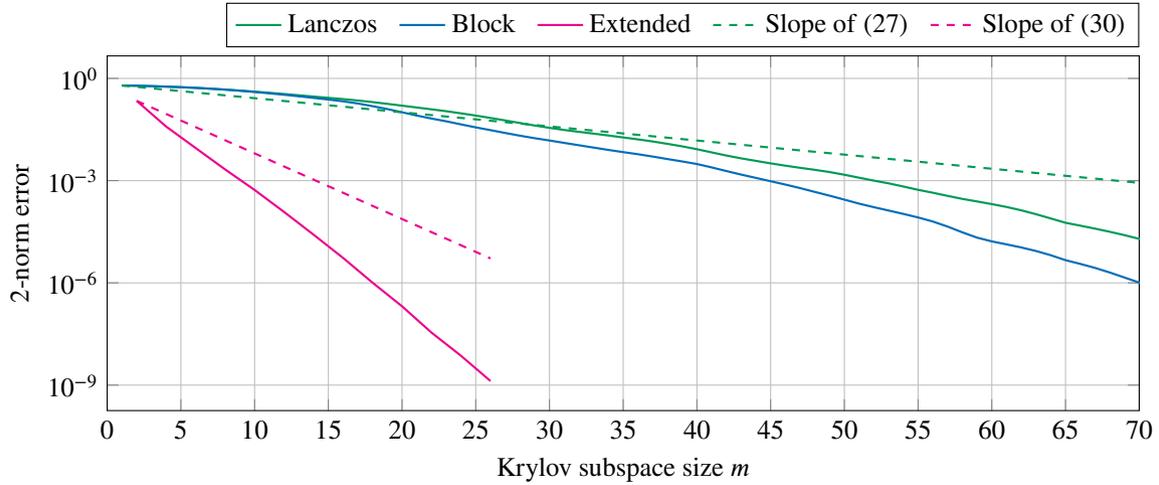

We begin by performing two simple experiments, one involving a Hermitian matrix and one involving a non-Hermitian matrix, in order to compare all the different methods that we proposed in Section~\ref{sec:frechet_approx}. First, let $A \in \Cnn$, $n = 32^2$ be the Hermitian positive definite matrix corresponding to the discretization of the two-dimensional Laplace equation on a square grid with Dirichlet boundary conditions. We consider a rank-one direction term $E = \vy\vz^H$, i.e., $\eta = 1$, where $\vy$ and $\vz$ are random vectors of unit norm and the inverse square root function $f(z) = z^{-1/2}$. We compare the basic polynomial Krylov method from Algorithm~\ref{alg:arnoldi_frechet} (where the Arnoldi process is replaced by the Lanczos process) to the block Lanczos method presented in Section~\ref{subsec:block} and the extended Krylov method from Section~\ref{subsec:rational} and aim for an approximation error below $10^{-8}$. The error norms of the corresponding approximations are given in Figure~\ref{fig:2dlaplace} together with the slopes of the convergence bounds from Theorem~\ref{the:convergence_stieltjes} and~\ref{the:convergence_stieltjes_extended}. The Lanczos method reaches the desired accuracy after 86 iterations, while the block Lanczos approach requires 74 iterations (i.e., about 15\% less than the standard Lanczos method), showcasing the larger approximation power of block Krylov spaces. As is expected, the extended Krylov method converges fastest in terms of subspace dimension, finding an accurate approximation in a space of dimension 26, but requires 13 linear system solves with $A$. Concerning the quality of our convergence estimates, it can be observed that the bound~\eqref{eq:convergence_bound_stieltjes} quite accurately predicts the slope of the real error norm, but of course fails to predict the superlinear convergence caused by spectral adaptation in later iterations. The bound~\eqref{eq:convergence_bound_stieltjes_extended} for the extended Krylov subspace method predicts convergence that is much faster than that of the polynomial methods but overestimates the actual slope by quite some margin. To gauge the approximation power of the extended Krylov subspace, from which we find a rank $26$ approximation of $L_f(A,E)$, we also plot the singular values of $L_f(A,E)$ in Figure~\ref{fig:sv} (left). This plot reveals that it is (in theory) possible to approximate $L_f(A,E)$ to accuracy $10^{-8}$ by a matrix of rank $9$. It is of course unrealistic to expect an iterative Krylov method to find this optimal low-rank approximation (in particular with non-optimized poles).

\begin{figure}[t]
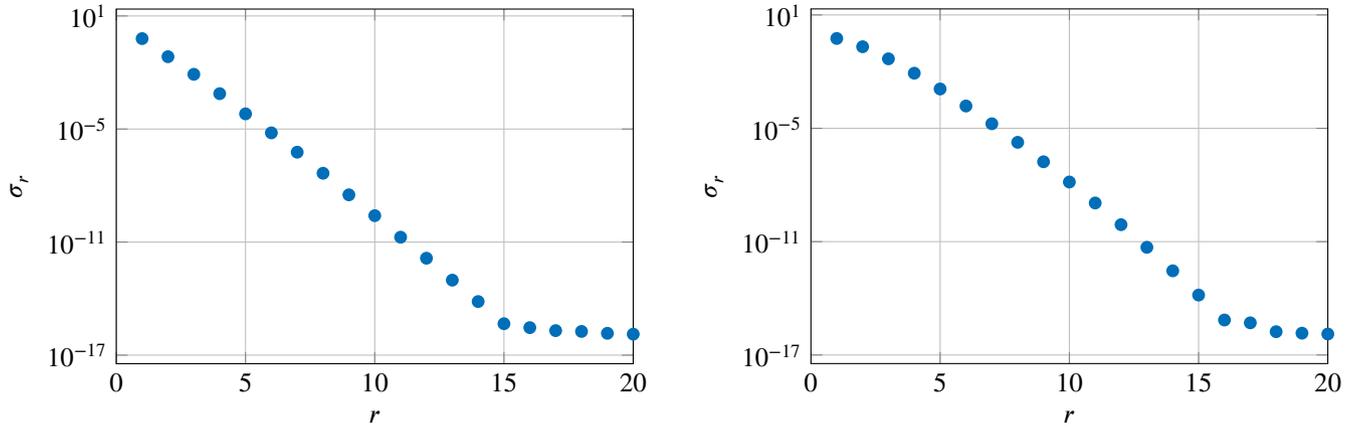

\centering
\input{./fig/experiment_laplace_sv.tikz}\hfill\input{./fig/experiment_convdiff_sv.tikz}
\caption{The 20 largest singular values of the Fr\'echet derivative $L_f(A,E)$ where (left) $A$ is the discretization of the two-dimensional Laplace operator and $f(z) = z^{-1/2}$ and (right) $A$ is the discretization of a two-dimensional convection diffusion operator and $f(z) = \exp(-tz)$. In both cases, $E$ is a random rank one matrix.}
\label{fig:sv}
\end{figure}

For testing the methods geared towards non-Hermitian problems, in particular the short-recurrence two-sided Lanczos method, we perform a similar experiment as before, but this time consider $A$ stemming from a semi-discretization of the following two-dimensional convection diffusion equation
\begin{eqnarray}
	\frac{\partial u}{\partial t} - \Delta u + \tau_1 \frac{\partial u}{\partial x_1} + \tau_2 \frac{\partial u}{\partial x_2} &=& 0\phantom{(x_0)} \text{ on } (0,1)^2 \times (0,T), \nonumber\\
u(x,t) &=& 0\phantom{(x_0)}\text{ on } \partial(0,1)^2 \text{ for all } t \in [0,T], \nonumber\label{eq:convdiff_equation} \\
u(x,0) &=& u_0(x) \text{ for all } x \in (0,1)^2.\nonumber
\end{eqnarray}
In particular, using central differences with uniform discretization step size $h$ for the differential operator $-\Delta u + \tau_1 \frac{\partial u}{\partial x_1} + \tau_2 \frac{\partial u}{\partial x_2}$ yields the matrix
\begin{equation}\label{eq:convdiff_discretization}
A = -\frac{1}{h^2} \left(I \otimes C_1 + C_2 \otimes I\right) \in \R^{n^2 \times n^2}
\end{equation}
with
$$C_i = \left[\begin{array}{ccccc}
-2 & 1-\frac{\tau_i h}{2} & & & \\
1+\frac{\tau_i h}{2} & -2 &1-\frac{\tau_i h}{2} & &\\
 & 1+\frac{\tau_i h}{2} & \ddots & \ddots &\\
 & & \ddots & \ddots & 1-\frac{\tau_i h}{2} \\
 & & & 1+\frac{\tau_i h}{2} & -2
\end{array}\right] \in \Rnn, i = 1,2.$$
The convection coefficients $\tau_i, i = 1,2$ are chosen such that the P{\'e}clet numbers $\text{Pe}_i = \frac{\tau_i h}{2}$ are equal to $\text{Pe}_1 = .5$ and $\text{Pe}_2 = .25$, respectively. We aim to approximate $L_f(A,E)$, where $f(z) =\exp(-tz)$ for a time step $t = .005$ and $E$ is a random rank one matrix. We compare the standard Arnoldi method, Algorithm~\ref{alg:arnoldi_frechet}, the two-sided Lanczos method from Section~\ref{subsec:twosided} and a shift-and-invert Krylov method, i.e., a rational Krylov method with a single repeated pole. As all eigenvalues of $A$ from~\eqref{eq:convdiff_discretization} are real and positive, we heuristically choose the shift $\xi = \sqrt{\lmin\lmax}$, a choice that is often employed in the Hermitian case.

The results of this experiment are depicted in Figure~\ref{fig:2dconvdiff}, and the largest singular values of $L_f(A,E)$ are given in Figure~\ref{fig:sv} (right). The Arnoldi method and two-sided Lanczos method require roughly the same subspace dimension for reaching the target accuracy, but the convergence curve of the two-sided method is very nonsmooth compared to that of the Arnoldi method, and in particular non-monotonic. The shift-and-invert method requires a little more than half the subspace dimension of the polynomial methods and produces a rank-$22$ approximation of $L_f(A,E)$. From the singular values, it can be seen that the best possible approximation reaching the target accuracy has rank $10$.

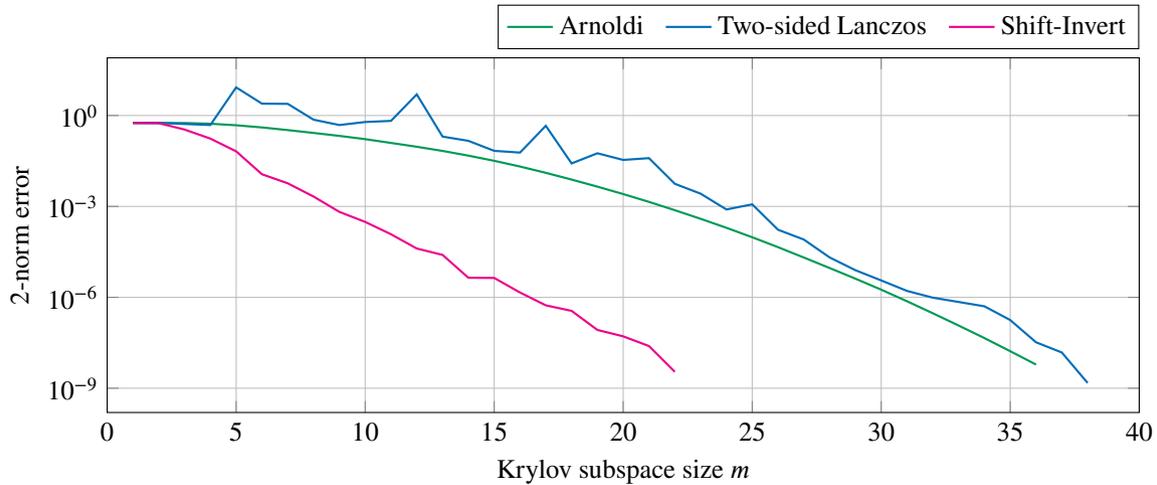
\begin{figure}[t]
\centering
\tikzsetnextfilename{convdiff}
\pgfplotsset{height=0.35\linewidth,width=0.85\linewidth,compat=1.10,every axis/.append style={legend style={/tikz/every even column/.append style={column sep=6pt}}}}
\pgfplotscreateplotcyclelist{list_laplace}{%
color_peter1,line width=1pt, mark=none, solid\\
color_peter3,line width=1pt, mark=none, densely dotted\\
}
\noindent%
\begin{tikzpicture}[scale=1]%
    \begin{semilogyaxis}[legend style={at={(1,1.15)}}, 
   	anchor= north east, legend columns=5,cycle list name=list_laplace, 
   	xmin=0, xmax=40,grid=major, 
   	xlabel={Krylov subspace size $m$}, ylabel={2-norm error}]

\addplot[color=ForestGreen,thick]
table [x ={x},y ={err_poly}] {fig/tikz_data/convdiff2d.dat};\addlegendentry{Arnoldi}
 
  \addplot[color=NavyBlue,thick]
table [x ={x},y ={err_twosided}]{fig/tikz_data/convdiff2d.dat} node [pos=0,left] {}; \addlegendentry{Two-sided Lanczos}

  \addplot[color=Magenta,thick]
table [x ={x},y ={err_si}]{fig/tikz_data/convdiff2d.dat} node [pos=0,left] {}; \addlegendentry{Shift-Invert}
    \end{semilogyaxis}
\end{tikzpicture}
\caption{Error norm when approximating $L_f(A,E)$ by several of our proposed methods where $A$ is the discretization of a two-dimensional convection diffusion operator,  $f(z) =\exp(-tz)$ and $E$ is a random rank one matrix.}
\label{fig:2dconvdiff}
\end{figure}

\subsection{Accurate computation of $L_f(A,E)\vb$}\label{sec:accuracy}

In our next experiment we will compare the accuracy of three
competing algorithms when aiming to approximate
$L_f(A,E)\vb$,
where $f(z) = e^z$ and $A$ is a matrix taken from
the Matrix Computation Toolbox\cite{high-mct}.
This toolbox contains a selection of difficult test matrices,
i.e. matrices that are known to be ill-conditioned or have
ill-conditioned eigenvalues.
The matrices $A$ from the toolbox are scaled to have unit $2$-norm
whilst $E$ and $b$ have elements drawn from a Normal $N(0,1)$ distribution.
Note that the condition number of $L_f(A,E)\vb$ is not clearly related to
the condition number of $A$ itself,
so even this scaling resulted in some matrices that were too
ill-conditioned to return sensible results.

There are three algorithms that we compare to one another
within this section.
The first is our algorithm~\ref{alg:arnoldi_frechet};
to which we make a minor modification by multiplying the result
by the vector $b$,
and iteratively increasing the rank $m$ until the relative
difference between two iterates is less than the desired tolerance.
The second algorithm we consider is taken directly
from Kandolf and Relton~\cite{KandolfRelton2016}.
They use a Krylov subspace approach to approximate
$L_f(A,E)\vb$ directly, without forming $L_f(A,E)$.
We will call this the ``KR algorithm''.
Our final algorithm is multiplying the result of the $2 \times 2$ block approach
in equation~\eqref{eq:2x2block} by the vector $[0, b]^T$,
from which we can obtain $L_f(A,E)\vb$ as the upper half of the
resulting vector.
By computing this latter vector using the MATLAB function
\texttt{expmv} (by Al-Mohy and Higham\cite{alhi11})
we do not need to form $L_f(A,E)$ in full.

We take as the ``exact'' answer, for comparison,
the result obtained by applying the
$2 \times 2$ block algorithm using $100$ digit arithmetic,
making use of the Symbolic Math Toolbox in MATLAB.

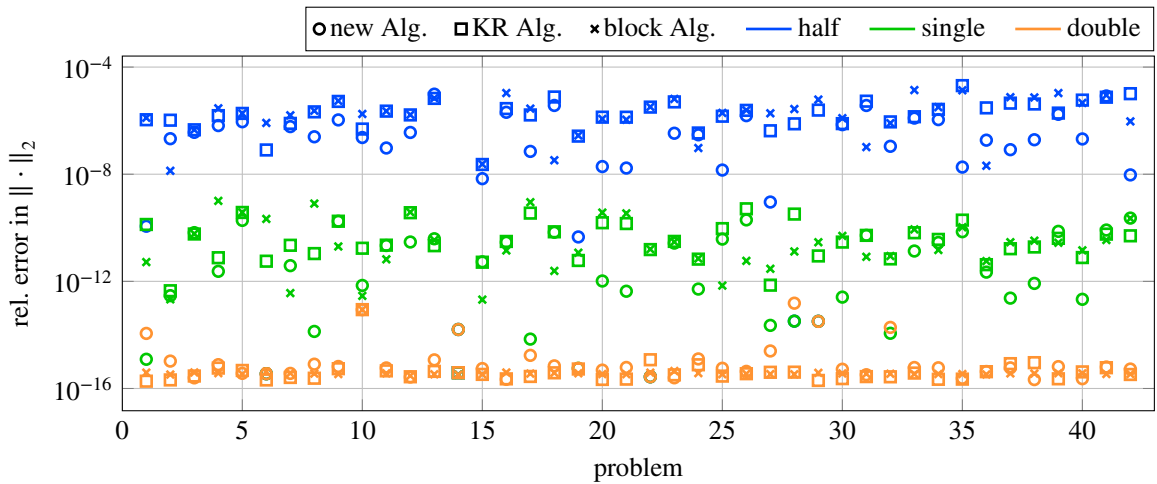
\begin{figure}[t]
\centering
\tikzsetnextfilename{accuracy_testmatrices}
\pgfplotsset{height=0.35\linewidth,width=0.85\linewidth,compat=1.10,every axis/.append style={legend style={/tikz/every even column/.append style={column sep=6pt}}}}
\pgfplotscreateplotcyclelist{list_atm}{%
color_peter1,line width=1pt, mark=o, solid\\
color_peter1,line width=1pt, mark=square, solid\\
color_peter1,line width=1pt, mark=x, solid\\
color_peter2,line width=1pt, mark=o, solid\\
color_peter2,line width=1pt, mark=square, solid\\
color_peter2,line width=1pt, mark=x, solid\\
color_peter3,line width=1pt, mark=o, solid\\
color_peter3,line width=1pt, mark=square, solid\\
color_peter3,line width=1pt, mark=x, solid\\
}

\noindent%
\begin{tikzpicture}[scale=1]%
    \begin{customlegend}[legend style={at={(0,0.65)}}, anchor= north east, 
 		legend columns=6,
 		legend style={/tikz/every even column/.append style={column sep=0.3cm}},
 		legend entries={new Alg., KR Alg., block Alg., half,single,double}]
        \addlegendimage{black,line width=1pt, mark=o, only marks}
        \addlegendimage{black,line width=1pt, mark=square, only marks}
        \addlegendimage{black,line width=1pt, mark=x, only marks}
        \addlegendimage{color_peter1,line width=1pt, solid, sharp plot}
        \addlegendimage{color_peter2,line width=1pt, solid,sharp plot}
        \addlegendimage{color_peter3,line width=1pt, solid,sharp plot}
   	\end{customlegend}
   	\begin{semilogyaxis}[legend style={at={(1,1.15)}}, 
   	anchor= north east, legend columns=5,cycle list name=list_atm, 
   	xmin=0, xmax=43,grid=major, 
   	xlabel={problem}, ylabel={rel. error in $\|\cdot\|_2$},
   	]
		\addplot+[only marks,] 
			table[header=false, x expr=\lineno+1, y index={0}, row sep=\\]
		   {1.1441e-10\\ 2.1168e-07\\ 3.6587e-07\\ 6.5453e-07\\ 
		   	9.1077e-07\\ 3.5584e-16\\ 5.9898e-07\\ 2.4960e-07\\ 1.0649e-06\\ 
		   	2.3452e-07\\ 9.5225e-08\\ 3.6216e-07\\ 9.7408e-06\\ 1.5966e-14\\ 
		   	6.8156e-09\\ 2.0514e-06\\ 7.1074e-08\\ 3.6933e-06\\ 4.5475e-11\\ 
		   	1.9305e-08\\ 1.6992e-08\\ 2.7423e-16\\ 3.3841e-07\\ 2.9639e-07\\ 
		   	1.4294e-08\\ 1.5357e-06\\ 9.1394e-10\\ 3.2690e-14\\ 
		   	3.2170e-14\\ 7.0900e-07\\ 3.6832e-06\\ 1.1016e-07\\ 1.2459e-06\\ 
		   	1.0921e-06\\ 1.8441e-08\\ 1.8648e-07\\ 8.2403e-08\\ 1.9355e-07\\ 
		   	1.7227e-06\\ 2.0707e-07\\ 8.3306e-06\\ 9.4224e-09\\ };

		\addplot+[only marks] 
			table[header=false, x expr=\lineno+1, y index={0}, row sep=\\]
		   {1.0817e-06\\ 1.0385e-06\\ 4.5547e-07\\ 1.5379e-06\\ 
		   	1.8611e-06\\ 8.0393e-08\\ 7.8946e-07\\ 2.1302e-06\\ 5.2241e-06\\ 
		   	4.8539e-07\\ 2.2661e-06\\ 1.6365e-06\\ 6.7786e-06\\ 3.8836e-16\\ 
		   	2.3062e-08\\ 2.8244e-06\\ 1.6326e-06\\ 7.6306e-06\\ 2.6436e-07\\ 
		   	1.3488e-06\\ 1.3443e-06\\ 3.2028e-06\\ 5.0091e-06\\ 3.4580e-07\\ 
		   	1.4640e-06\\ 2.4221e-06\\ 4.1873e-07\\ 7.6162e-07\\ 
		   	2.4751e-06\\ 7.7553e-07\\ 5.3327e-06\\ 8.9841e-07\\ 1.4005e-06\\ 
		   	2.6365e-06\\ 2.0303e-05\\ 2.9987e-06\\ 4.5008e-06\\ 4.1864e-06\\ 
		   	1.9143e-06\\ 5.8402e-06\\ 7.5000e-06\\ 1.0161e-05\\};

		\addplot+[only marks] 
			table[header=false, x expr=\lineno+1, y index={0}, row sep=\\]
		   {1.2789e-06\\ 1.3413e-08\\ 4.5772e-07\\ 2.8869e-06\\ 
		   	1.8626e-06\\ 8.2207e-07\\ 1.5840e-06\\ 2.3424e-06\\ 5.4739e-06\\ 
		   	1.7774e-06\\ 2.2647e-06\\ 1.6585e-06\\ 6.8995e-06\\ 3.2844e-16\\ 
		   	2.3829e-08\\ 1.0730e-05\\ 2.8094e-06\\ 3.3262e-08\\ 2.7879e-07\\ 
		   	1.1607e-06\\ 1.0842e-06\\ 3.2579e-06\\ 6.4702e-06\\ 9.5031e-08\\ 
		   	1.9436e-06\\ 2.4592e-06\\ 1.8820e-06\\ 2.7141e-06\\ 
		   	6.1152e-06\\ 1.2362e-06\\ 1.0299e-07\\ 8.0343e-07\\ 1.3845e-05\\ 
		   	3.1832e-06\\ 1.3593e-05\\ 2.0682e-08\\ 7.5154e-06\\ 7.3428e-06\\ 
		   	1.0717e-05\\ 4.5940e-06\\ 8.9374e-06\\ 9.3788e-07\\ };

		\addplot+[only marks] 
			table[header=false, x expr=\lineno+1, y index={0}, row sep=\\]
		   {1.2316e-15\\ 2.9194e-13\\ 6.5840e-11\\ 2.3706e-12\\ 
		   	1.8878e-10\\ 3.5584e-16\\ 3.8570e-12\\ 1.3495e-14\\ 1.7527e-10\\ 
		   	7.0317e-13\\ 2.1808e-11\\ 2.9451e-11\\ 3.8873e-11\\ 1.5966e-14\\ 
		   	5.3542e-12\\ 2.8885e-11\\ 6.9776e-15\\ 6.7832e-11\\ 5.6535e-16\\ 
		   	1.0306e-12\\ 4.2743e-13\\ 2.7423e-16\\ 2.6789e-11\\ 5.1512e-13\\ 
		   	3.7832e-11\\ 1.9708e-10\\ 2.2702e-14\\ 3.2690e-14\\ 
		   	3.2170e-14\\ 2.5770e-13\\ 5.2184e-11\\ 1.1513e-14\\ 1.3558e-11\\ 
		   	2.8262e-11\\ 7.1371e-11\\ 2.2084e-12\\ 2.3628e-13\\ 8.3750e-13\\ 
		   	7.3512e-11\\ 2.1362e-13\\ 8.2353e-11\\ 2.2985e-10\\ };

		\addplot+[only marks] 
			table[header=false, x expr=\lineno+1, y index={0}, row sep=\\]
		   {1.3279e-10\\ 4.4529e-13\\ 5.8598e-11\\ 7.6220e-12\\ 
		   	3.7342e-10\\ 5.6525e-12\\ 2.2207e-11\\ 1.0891e-11\\ 1.7583e-10\\ 
		   	1.7226e-11\\ 2.2446e-11\\ 3.6541e-10\\ 2.1432e-11\\ 3.8836e-16\\ 
		   	5.1795e-12\\ 3.0872e-11\\ 3.5299e-10\\ 7.0862e-11\\ 6.0155e-12\\ 
		   	1.5441e-10\\ 1.4389e-10\\ 1.5351e-11\\ 3.1043e-11\\ 6.7515e-12\\ 
		   	9.3129e-11\\ 5.1128e-10\\ 7.2025e-13\\ 3.2480e-10\\ 
		   	8.8761e-12\\ 2.9240e-11\\ 5.2109e-11\\ 6.8947e-12\\ 6.5299e-11\\ 
		   	3.6820e-11\\ 1.9307e-10\\ 4.1968e-12\\ 1.6521e-11\\ 1.9177e-11\\ 
		   	4.0368e-11\\ 7.7410e-12\\ 5.7960e-11\\ 5.0152e-11\\ };

		\addplot+[only marks] 
			table[header=false, x expr=\lineno+1, y index={0}, row sep=\\]
		   {5.2387e-12\\ 2.1345e-13\\ 6.0076e-11\\ 1.0107e-09\\ 
		   	3.7518e-10\\ 2.1425e-10\\ 3.6459e-13\\ 8.0090e-10\\ 2.0050e-11\\ 
		   	2.8740e-13\\ 6.6507e-12\\ 3.8184e-10\\ 3.4616e-11\\ 3.2844e-16\\ 
		   	2.0789e-13\\ 1.4241e-11\\ 9.0137e-10\\ 2.4530e-12\\ 1.1717e-11\\ 
		   	3.6912e-10\\ 3.4428e-10\\ 1.5802e-11\\ 2.9787e-11\\ 7.1151e-12\\ 
		   	6.9520e-13\\ 5.8205e-12\\ 2.9468e-12\\ 1.3045e-11\\ 
		   	2.8839e-11\\ 4.9737e-11\\ 8.2478e-12\\ 8.7986e-12\\ 8.7175e-11\\ 
		   	1.4825e-11\\ 1.0487e-10\\ 5.7005e-12\\ 2.8869e-11\\ 3.2935e-11\\ 
		   	2.7555e-11\\ 1.4376e-11\\ 3.4975e-11\\ 2.2450e-10\\ };

		\addplot+[only marks] 
			table[header=false, x expr=\lineno+1, y index={0}, row sep=\\]
		   {1.1375e-14\\ 1.0532e-15\\ 2.4955e-16\\ 7.7434e-16\\ 
		   	3.6727e-16\\ 3.5584e-16\\ 3.6559e-16\\ 8.0713e-16\\ 6.6231e-16\\ 
		   	8.7645e-14\\ 5.8446e-16\\ 2.6815e-16\\ 1.1710e-15\\ 1.5966e-14\\ 
		   	5.5414e-16\\ 2.1949e-16\\ 1.7258e-15\\ 6.9903e-16\\ 5.6589e-16\\ 
		   	4.8459e-16\\ 6.0903e-16\\ 2.7423e-16\\ 2.5098e-16\\ 1.2689e-15\\ 
		   	5.6838e-16\\ 4.3476e-16\\ 2.5147e-15\\ 1.5281e-13\\ 
		   	3.1990e-14\\ 5.2243e-16\\ 3.2584e-16\\ 1.9061e-14\\ 6.0716e-16\\ 
		   	5.9279e-16\\ 2.3941e-16\\ 4.3044e-16\\ 5.8900e-16\\ 2.1493e-16\\ 
		   	6.5474e-16\\ 2.3164e-16\\ 6.3299e-16\\ 5.3137e-16\\ };

		\addplot+[only marks] 
			table[header=false, x expr=\lineno+1, y index={0}, row sep=\\]
		   {1.8734e-16\\ 2.0899e-16\\ 2.9221e-16\\ 5.7789e-16\\ 
		   	4.7098e-16\\ 2.0839e-16\\ 2.5847e-16\\ 2.4154e-16\\ 5.4835e-16\\ 
		   	8.7998e-14\\ 4.5515e-16\\ 2.7398e-16\\ 4.3499e-16\\ 3.8836e-16\\ 
		   	3.3601e-16\\ 2.2863e-16\\ 2.8171e-16\\ 3.8490e-16\\ 5.3350e-16\\ 
		   	2.1783e-16\\ 2.2356e-16\\ 1.1829e-15\\ 3.2275e-16\\ 7.6782e-16\\ 
		   	2.9003e-16\\ 3.5672e-16\\ 4.0286e-16\\ 4.0698e-16\\ 
		   	1.9944e-16\\ 2.3347e-16\\ 2.7796e-16\\ 2.7311e-16\\ 3.7614e-16\\ 
		   	2.2041e-16\\ 2.2355e-16\\ 4.2370e-16\\ 8.4558e-16\\ 9.2044e-16\\ 
		   	2.2987e-16\\ 4.1610e-16\\ 5.9667e-16\\ 3.3332e-16\\ };

		\addplot+[only marks] 
			table[header=false, x expr=\lineno+1, y index={0}, row sep=\\]
		   {3.9112e-16\\ 3.3178e-16\\ 3.9654e-16\\ 3.7281e-16\\ 
		   	3.4168e-16\\ 4.2904e-16\\ 3.7321e-16\\ 3.5991e-16\\ 3.4467e-16\\ 
		   	8.7626e-14\\ 3.6711e-16\\ 2.9958e-16\\ 3.3848e-16\\ 3.2844e-16\\ 
		   	3.3769e-16\\ 3.9415e-16\\ 3.6008e-16\\ 3.8328e-16\\ 3.6924e-16\\ 
		   	3.4520e-16\\ 3.4163e-16\\ 3.8935e-16\\ 4.4952e-16\\ 3.7449e-16\\ 
		   	3.4127e-16\\ 3.5257e-16\\ 3.4451e-16\\ 3.3802e-16\\ 
		   	3.8448e-16\\ 3.6458e-16\\ 3.5527e-16\\ 3.5506e-16\\ 3.4648e-16\\ 
		   	3.4527e-16\\ 3.4576e-16\\ 3.3289e-16\\ 3.6582e-16\\ 3.6089e-16\\ 
		   	3.7116e-16\\ 3.6654e-16\\ 3.4754e-16\\ 3.4480e-16\\ };
    \end{semilogyaxis}
\end{tikzpicture}
\caption{Relative error obtained by the three competing algorithms
  when aiming for half, single, and double precision accuracy over a
  range of test problems.}
\label{fig.accuracy_testmats}
\end{figure}

The results of the experiment are shown in
Figure~\ref{fig.accuracy_testmats}.
We can see that all algorithms tend to obtain the desired
relative error,
although all algorithms struggled to obtain double precision accuracy
on problem number $10$ and our algorithm did not obtain double
precision accuracy in a few of the other test cases.
This is not completely unexpected: we use a rather simple
stopping criteria for our iterative method
(examining the relative difference between two iterates)
whilst the KR algorithm has rigorous a priori error
analysis~\cite{KandolfRelton2016}
and the block method is backward stable\cite{alhi11}.
Further experimentation with these test cases showed that allowing our
method to perform a few more iterations allowed us to reach the
desired accuracy,
so we are merely terminating early rather than performing an unstable
computation.

We also see that, especially for single and half precision,
our algorithm was often the most accurate despite the
backward stable nature of the block algorithm.

\begin{figure}[t]
\centering
\tikzsetnextfilename{accuracy_testmatrices_times}
\pgfplotsset{height=0.35\linewidth,width=0.85\linewidth,compat=1.10,every axis/.append style={legend style={/tikz/every even column/.append style={column sep=6pt}}}}
\pgfplotscreateplotcyclelist{list_atm}{%
color_peter1,line width=1pt, mark=o, solid\\
color_peter1,line width=1pt, mark=square, solid\\
color_peter1,line width=1pt, mark=x, solid\\
color_peter2,line width=1pt, mark=o, solid\\
color_peter2,line width=1pt, mark=square, solid\\
color_peter2,line width=1pt, mark=x, solid\\
color_peter3,line width=1pt, mark=o, solid\\
color_peter3,line width=1pt, mark=square, solid\\
color_peter3,line width=1pt, mark=x, solid\\
}

\noindent%
\begin{tikzpicture}[scale=1]%
    \begin{customlegend}[legend style={at={(0,0.65)}}, anchor= north east, 
 		legend columns=6,
 		legend style={/tikz/every even column/.append style={column sep=0.3cm}},
 		legend entries={new Alg., KR Alg., block Alg., half,single,double}]
        \addlegendimage{black,line width=1pt, mark=o, only marks}
        \addlegendimage{black,line width=1pt, mark=square, only marks}
        \addlegendimage{black,line width=1pt, mark=x, only marks}
        \addlegendimage{color_peter1,line width=1pt, solid, sharp plot}
        \addlegendimage{color_peter2,line width=1pt, solid,sharp plot}
        \addlegendimage{color_peter3,line width=1pt, solid,sharp plot}
   	\end{customlegend}
   	\begin{semilogyaxis}[legend style={at={(1,1.15)}}, 
   	anchor= north east, legend columns=5,cycle list name=list_atm, 
   	xmin=0, xmax=43,grid=major, 
   	xlabel={problem}, ylabel={CPU time in $[sec]$},
   	]
		\addplot+[only marks,] 
			table[header=false, x expr=\lineno+1, y index={0}, row sep=\\]
		   {2.7405e-02\\ 1.1750e-02\\ 6.6330e-03\\ 5.5180e-03\\ 2.7030e-03\\ 
		   	1.9540e-03\\ 2.6410e-03\\ 1.8810e-03\\ 2.5670e-03\\ 1.5760e-03\\ 
		   	2.0170e-03\\ 2.0000e-03\\ 1.5470e-03\\ 9.7500e-04\\ 1.0890e-03\\ 
		   	1.0390e-03\\ 9.9200e-04\\ 1.0230e-03\\ 1.0170e-03\\ 1.0120e-03\\ 
		   	9.4100e-04\\ 9.4300e-04\\ 1.3150e-03\\ 9.2400e-04\\ 1.3160e-03\\ 
		   	2.9055e-02\\ 1.3300e-03\\ 8.9100e-04\\ 9.0500e-04\\ 8.6000e-04\\ 
		   	8.1900e-04\\ 8.2200e-04\\ 8.4800e-04\\ 7.8200e-04\\ 1.1610e-03\\ 
		   	7.8100e-04\\ 8.0700e-04\\ 7.7700e-04\\ 7.7700e-04\\ 7.8400e-04\\ 
		   	8.3700e-04\\ 1.2240e-03\\ };

		\addplot+[only marks] 
			table[header=false, x expr=\lineno+1, y index={0}, row sep=\\]
		   {7.7295e-02\\ 2.5331e-02\\ 9.1330e-03\\ 9.2680e-03\\ 3.6300e-03\\ 
		   	3.2660e-03\\ 2.6870e-03\\ 2.5300e-03\\ 2.6230e-03\\ 2.2260e-03\\ 
		   	2.3230e-03\\ 2.3130e-03\\ 2.0240e-03\\ 1.4330e-03\\ 1.5660e-03\\ 
		   	1.4620e-03\\ 1.4170e-03\\ 5.8470e-03\\ 1.3910e-03\\ 1.3940e-03\\ 
		   	1.3470e-03\\ 1.4000e-03\\ 2.4330e-03\\ 1.3330e-03\\ 1.4370e-03\\ 
		   	3.3627e-02\\ 1.6800e-03\\ 1.2950e-03\\ 1.2510e-03\\ 1.2960e-03\\ 
		   	1.4330e-03\\ 1.2340e-03\\ 1.2720e-03\\ 1.2810e-03\\ 1.4420e-03\\ 
		   	1.2930e-03\\ 1.2620e-03\\ 1.2580e-03\\ 1.2980e-03\\ 1.2830e-03\\ 
		   	3.5100e-03\\ 1.4430e-03\\ };

		\addplot+[only marks] 
			table[header=false, x expr=\lineno+1, y index={0}, row sep=\\]
		   {5.8010e-03\\ 4.5210e-03\\ 5.8900e-03\\ 6.8650e-03\\ 1.1620e-03\\ 
		   	1.3850e-03\\ 1.1340e-03\\ 1.3230e-03\\ 9.3200e-04\\ 1.1910e-03\\ 
		   	9.1400e-04\\ 9.8300e-04\\ 1.3050e-03\\ 9.9100e-04\\ 8.9600e-04\\ 
		   	1.0120e-03\\ 9.9200e-04\\ 7.3000e-04\\ 9.7100e-04\\ 9.9300e-04\\ 
		   	1.0500e-03\\ 9.6600e-04\\ 9.6100e-04\\ 9.2800e-04\\ 8.8800e-04\\ 
		   	5.7390e-03\\ 1.7330e-03\\ 9.3100e-04\\ 8.9800e-04\\ 9.2600e-04\\ 
		   	6.4900e-04\\ 9.1300e-04\\ 8.6600e-04\\ 8.6900e-04\\ 8.5100e-04\\ 
		   	9.1100e-04\\ 8.7000e-04\\ 8.8100e-04\\ 8.4000e-04\\ 8.5600e-04\\ 
		   	8.5700e-04\\ 8.8800e-04\\ };

		\addplot+[only marks] 
			table[header=false, x expr=\lineno+1, y index={0}, row sep=\\]
		   {1.1510e-03\\ 1.7990e-03\\ 2.3850e-03\\ 1.7130e-03\\ 2.2650e-03\\ 
		   	7.6200e-04\\ 1.5960e-03\\ 1.6280e-03\\ 2.2020e-03\\ 1.6010e-03\\ 
		   	2.1980e-03\\ 2.2370e-03\\ 1.6690e-03\\ 5.7100e-04\\ 1.1400e-03\\ 
		   	1.7060e-03\\ 1.6490e-03\\ 1.5940e-03\\ 1.1350e-03\\ 1.1470e-03\\ 
		   	1.1590e-03\\ 7.6500e-04\\ 2.1950e-03\\ 1.5940e-03\\ 1.6160e-03\\ 
		   	6.4370e-03\\ 1.2610e-03\\ 7.6600e-04\\ 7.6600e-04\\ 1.6260e-03\\ 
		   	1.6240e-03\\ 1.6180e-03\\ 1.5940e-03\\ 1.5960e-03\\ 1.6130e-03\\ 
		   	1.6000e-03\\ 1.6080e-03\\ 1.5930e-03\\ 1.5940e-03\\ 1.6100e-03\\ 
		   	1.5990e-03\\ 1.6060e-03\\ };

		\addplot+[only marks] 
			table[header=false, x expr=\lineno+1, y index={0}, row sep=\\]
		   {1.8360e-03\\ 1.7970e-03\\ 1.9670e-03\\ 1.9460e-03\\ 1.7260e-03\\ 
		   	1.0880e-03\\ 1.5430e-03\\ 1.5550e-03\\ 1.7080e-03\\ 1.5920e-03\\ 
		   	1.7210e-03\\ 1.8110e-03\\ 1.6670e-03\\ 1.0650e-03\\ 1.4980e-03\\ 
		   	1.5570e-03\\ 1.5370e-03\\ 1.5980e-03\\ 1.4040e-03\\ 1.3840e-03\\ 
		   	1.3600e-03\\ 1.3520e-03\\ 1.9140e-03\\ 1.6170e-03\\ 1.6180e-03\\ 
		   	4.3440e-03\\ 1.5010e-03\\ 1.2890e-03\\ 1.2950e-03\\ 1.6610e-03\\ 
		   	1.6540e-03\\ 1.5900e-03\\ 1.6460e-03\\ 1.6280e-03\\ 1.6030e-03\\ 
		   	1.5920e-03\\ 1.5970e-03\\ 1.6090e-03\\ 1.8000e-03\\ 1.6140e-03\\ 
		   	1.6330e-03\\ 1.6560e-03\\ };

		\addplot+[only marks] 
			table[header=false, x expr=\lineno+1, y index={0}, row sep=\\]
		   {1.1890e-03\\ 9.5200e-04\\ 1.0290e-03\\ 8.5200e-04\\ 5.9000e-04\\ 
		   	8.9400e-04\\ 7.3300e-04\\ 8.5900e-04\\ 5.8000e-04\\ 7.4900e-04\\ 
		   	5.8800e-04\\ 6.5500e-04\\ 8.9000e-04\\ 9.0400e-04\\ 7.5400e-04\\ 
		   	8.7900e-04\\ 8.3700e-04\\ 5.5400e-04\\ 8.4900e-04\\ 8.6700e-04\\ 
		   	9.0400e-04\\ 9.1500e-04\\ 9.5200e-04\\ 8.8200e-04\\ 8.8600e-04\\ 
		   	1.2710e-03\\ 1.1000e-03\\ 8.8600e-04\\ 8.6200e-04\\ 7.4200e-04\\ 
		   	5.5400e-04\\ 9.4800e-04\\ 8.9200e-04\\ 8.8600e-04\\ 8.7000e-04\\ 
		   	8.7700e-04\\ 8.6800e-04\\ 8.8400e-04\\ 8.6300e-04\\ 8.8300e-04\\ 
		   	8.6300e-04\\ 8.8100e-04\\ };

		\addplot+[only marks] 
			table[header=false, x expr=\lineno+1, y index={0}, row sep=\\]
		   {7.2370e-03\\ 3.7340e-03\\ 5.9660e-03\\ 4.7900e-03\\ 5.8810e-03\\ 
		   	7.6500e-04\\ 3.6870e-03\\ 2.8780e-03\\ 7.0840e-03\\ 3.7220e-03\\ 
		   	5.8240e-03\\ 5.8840e-03\\ 4.7710e-03\\ 4.5700e-04\\ 2.8900e-03\\ 
		   	4.6970e-03\\ 4.7540e-03\\ 5.7860e-03\\ 1.5980e-03\\ 7.1120e-03\\ 
		   	2.1780e-03\\ 7.6200e-04\\ 7.0750e-03\\ 3.7660e-03\\ 4.6930e-03\\ 
		   	1.5715e-02\\ 5.9690e-03\\ 1.1430e-03\\ 1.1760e-03\\ 7.0730e-03\\ 
		   	4.6780e-03\\ 4.7420e-03\\ 3.7230e-03\\ 3.7040e-03\\ 3.6990e-03\\ 
		   	3.7380e-03\\ 7.1510e-03\\ 3.7120e-03\\ 3.6960e-03\\ 2.8740e-03\\ 
		   	5.8320e-03\\ 4.6640e-03\\ };

		\addplot+[only marks] 
			table[header=false, x expr=\lineno+1, y index={0}, row sep=\\]
		   {2.4310e-03\\ 2.8550e-03\\ 4.5610e-03\\ 3.4900e-03\\ 3.5170e-03\\ 
		   	1.1900e-03\\ 2.3920e-03\\ 2.0190e-03\\ 3.7590e-03\\ 2.3740e-03\\ 
		   	3.8020e-03\\ 3.8130e-03\\ 2.2920e-03\\ 1.1400e-03\\ 2.1790e-03\\ 
		   	3.3200e-03\\ 2.9120e-03\\ 4.3180e-03\\ 2.3520e-03\\ 2.6970e-03\\ 
		   	2.5750e-03\\ 1.0035e-02\\ 3.3620e-03\\ 3.3250e-03\\ 2.4200e-03\\ 
		   	5.9970e-03\\ 1.8780e-03\\ 1.4870e-03\\ 1.4460e-03\\ 2.8990e-03\\ 
		   	2.3840e-03\\ 2.4740e-03\\ 2.2040e-03\\ 3.0480e-03\\ 2.2510e-03\\ 
		   	3.0790e-03\\ 2.6460e-03\\ 3.0600e-03\\ 2.5750e-03\\ 1.9960e-03\\ 
		   	2.3630e-03\\ 2.5640e-03\\ };

		\addplot+[only marks] 
			table[header=false, x expr=\lineno+1, y index={0}, row sep=\\]
		   {1.2670e-03\\ 1.0310e-03\\ 1.1590e-03\\ 9.8400e-04\\ 6.2300e-04\\ 
		   	1.0060e-03\\ 7.5700e-04\\ 1.0160e-03\\ 6.3300e-04\\ 8.2900e-04\\ 
		   	6.5000e-04\\ 6.5400e-04\\ 1.0130e-03\\ 8.9600e-04\\ 8.0800e-04\\ 
		   	1.0160e-03\\ 9.9500e-04\\ 6.0900e-04\\ 9.7200e-04\\ 1.0000e-03\\ 
		   	1.0460e-03\\ 1.0310e-03\\ 1.0450e-03\\ 1.0150e-03\\ 1.0080e-03\\ 
		   	1.2730e-03\\ 1.1530e-03\\ 9.8400e-04\\ 1.0200e-03\\ 8.2400e-04\\ 
		   	6.0700e-04\\ 9.9300e-04\\ 1.0090e-03\\ 1.0350e-03\\ 1.0070e-03\\ 
		   	1.0530e-03\\ 1.0420e-03\\ 1.0350e-03\\ 1.0270e-03\\ 1.0350e-03\\ 
		   	1.0120e-03\\ 1.0510e-03\\ };
    \end{semilogyaxis}
\end{tikzpicture}
\caption{Time (in seconds) required by the three competing algorithms
  when aiming for half, single, and double precision accuracy over a
  range of test problems.}
\label{fig.timing_testmats}
\end{figure}
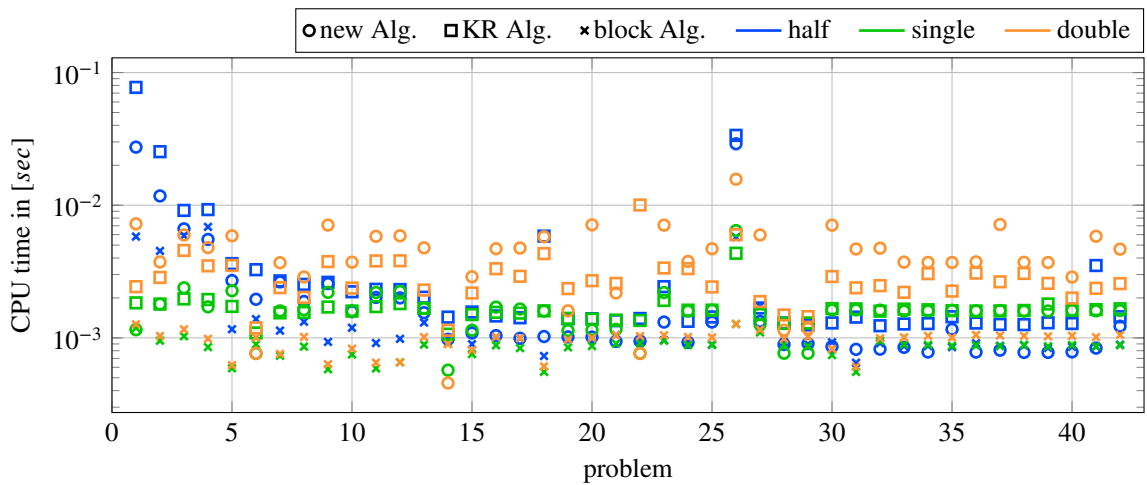

In Figure~\ref{fig.timing_testmats} we plot the time required for each
of the computations performed by all the algorithms.
For half and single precision accuracy we find that the
either our new algorithm or the block algorithm are fastest,
with the KR algorithm trailing behind.
When aiming for double precision accuracy our new method is often
slower than the KR algorithm which, in turn,
is slower than the block algorithm.

This is also to be expected,
both our algorithm and the KR algorithm are based upon
Krylov methods and low-rank approximation,
which tend to work best on large sparse problems as opposed to the
small dense problems considered here.
Furthermore, our algorithm approximates the entire
\Fd\
(using a Krylov space which is independent of the vector)
which is then multiplied by a vector.
Therefore the cost of applying our new method to multiple vectors is
essentially the same as for a single vector.
By contrast the KR algorithm builds a Krylov space dependent upon the
vector and must be entirely rerun should this vector change.
In the next subsection we require the computation of the entire
\Fd,
a situation in which Algorithm~\ref{alg:arnoldi_frechet} excels.

\subsection{Nuclear activation and
  transmutation}\label{sec:nucl-transm}

One application requiring the entire \Fd\ is
the computation of the sensitivity of nuclear activation and
transmutation events.
To brief\/ly summarize,
we are interested in the sensitivity of $\vf^T\!\!\vx(t)$
to perturbations in the matrix $A$,
where $\vx(t)$ and $A$ satisfy the Bateman equation
\begin{equation}\nonumber
  \label{eq.bateman}
  \frac{d\vx}{dt} = A\vx(t),\quad \vx(0) = \bf{x_0}.
\end{equation}
The solution to this equation is clearly $\vx(t) = \exp(tA)\bf{x_0}$.
Within this application $\vx(t)$ gives the time-varying nuclide numbers and
the matrix $A$ (which is sparse and nonsymmetric) contains the
coefficients associated to various nuclear reactions.
Since the elements of the matrix $A$ are determined
via physical experiments they are inherently noisy and it is important
to check that the quantity $\vf^T\!\!\vx(t)$ is not overly sensitive to
perturbations in these values.
In the appendix of~\cite{amrh15} it is shown that the $k$ most
sensitive entries of $A$ are the $k$ largest elements of
\begin{equation}\nonumber
  \label{eq.nuclear_expm}
  L_{\exp}(tA^T, E) = L_{\exp}(tA, E^T)^T,\quad \mbox{where $E = \vf\bf{x_0}^T$}.
\end{equation}

By combining recent work from Higham and Relton~\cite{hire16}
with efficient algorithms for $L_{\exp}(tA, E^T)\vb$ presented in
\cite{KandolfRelton2016} and the previous sections,
the largest $k$ elements of $L_{\exp}(tA, E^T)$ can be found without
forming the entire \Fd\ itself.
However,
one often requires all the sensitivities and therefore needs to
compute the entire \Fd.

We will test the relative error and the time to compute these
\Fd\ using our new methodology,
on three real test problems from nuclear physics,
when compared against the block $2\times 2$ approach and
the code \texttt{expm\_frechet\_pade},
found in the Matrix Function Toolbox~\cite{Higham2008},~\cite{high-mft}.
Since the vectors $\vf$ and $\bf{x_0}$ arising from this application are
not equal we use the Arnoldi algorithm. In each case we simply use the value $t = 1$.

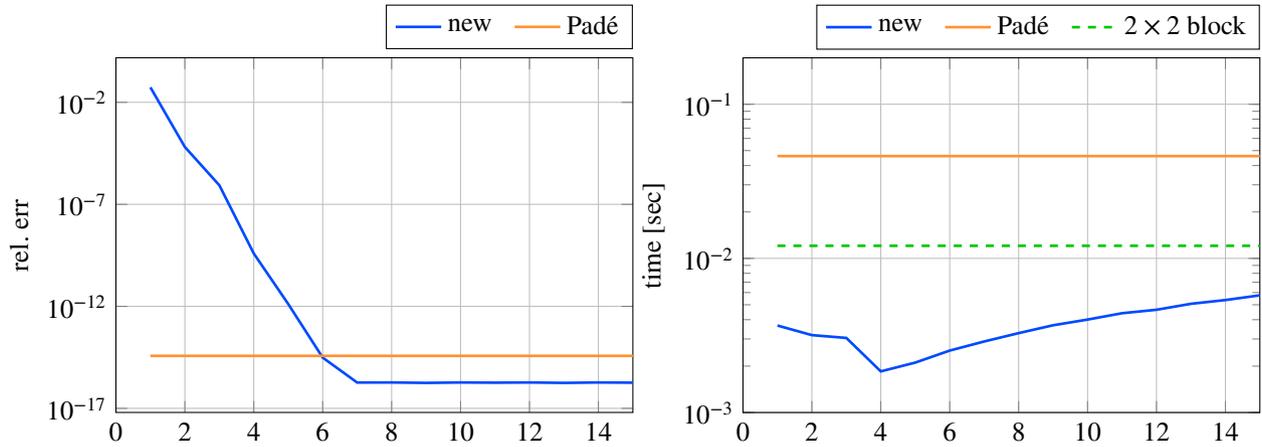
\begin{figure}
  \centering
\tikzsetnextfilename{wmix}
\pgfplotsset{height=0.35\linewidth,width=0.47\linewidth,compat=1.10,every axis/.append style={legend style={/tikz/every even column/.append style={column sep=6pt}}}}
\pgfplotscreateplotcyclelist{list_wmix}{%
color_peter1,line width=1pt, mark=none, solid\\
color_peter3,line width=1pt, mark=none, solid\\
color_peter2,line width=1pt, mark=none, dashed\\
}
\begin{tikzpicture}[scale=1]%
    \begin{semilogyaxis}[legend style={at={(1,1.15)}}, 
   		anchor= north east, legend columns=5,cycle list name=list_wmix, 
   		xmin=0, xmax=15,grid=major, 
   		xlabel={}, ylabel={rel.\ err}]
		\addplot+[] 
			table[header=false, x expr=\lineno+1, y index={0}, row sep=\\]
		   {5.4313e-02\\ 6.4663e-05\\ 8.6120e-07\\ 3.8825e-10\\ 1.3013e-12\\ 
		   	3.0669e-15\\ 1.8384e-16\\ 1.8537e-16\\ 1.7685e-16\\ 1.8537e-16\\ 
		   	1.8111e-16\\ 1.8537e-16\\ 1.7685e-16\\ 1.8537e-16\\ 1.8111e-16\\ };
		\addlegendentry{new};
		\addplot+[] 
			table[header=false, x index={0}, y index={1}, row sep=\\]
		   {1 3.7448e-15\\15 3.7448e-15\\};
		\addlegendentry{Pad\'{e}};
	\end{semilogyaxis}
\end{tikzpicture}%
\begin{tikzpicture}[scale=1]%
    \begin{semilogyaxis}[legend style={at={(1,1.15)}}, 
   		anchor= north east, legend columns=5,cycle list name=list_wmix, 
   		xmin=0, xmax=15,ymin=1e-3, ymax=2e-1,grid=major, 
   		xlabel={}, ylabel={time [sec]}]
		\addplot+[] 
			table[header=false, x expr=\lineno+1, y index={0}, row sep=\\]
		   {3.6680e-03\\ 3.1780e-03\\ 3.0470e-03\\ 1.8490e-03\\ 2.1080e-03\\ 
		   	2.5230e-03\\ 2.8900e-03\\ 3.2740e-03\\ 3.6820e-03\\ 4.0080e-03\\ 
		   	4.4070e-03\\ 4.6410e-03\\ 5.0740e-03\\ 5.3660e-03\\ 5.7580e-03\\ };
		\addlegendentry{new},
		\addplot+[] 
			table[header=false, x index={0}, y index={1}, row sep=\\]
		   {1 4.6082e-02\\15 4.6082e-02\\};		   
		\addlegendentry{Pad\'{e}};
			\addplot+[] 
			table[header=false, x index={0}, y index={1}, row sep=\\]
		   {1 1.2064e-02\\ 15 1.2064e-02\\};
		   \addlegendentry{$2\times 2$ block};
    \end{semilogyaxis}
\end{tikzpicture}
  \caption{The relative error (left) and the time in seconds (right)
    when computing a low-rank approximation to the
    \Fd\ for the \texttt{wmix} matrix.
    The rank of the approximation is given on the $x$-axis.}
  \label{fig.wmix}
\end{figure}

Our first test problem results in a matrix of size $69 \times 69$
named \texttt{wmix}.
In Figure~\ref{fig.wmix} we give the relative error in comparison to
the block method and the time to compute each approximation.
Since the block method and \texttt{expm\_frechet\_pade}
compute the exact \Fd\ instead of a low-rank
approximation their timings are constant as the rank changes.
We see that even a rank 7 approximation is numerically identical
to the solution returned by the block method and is much faster than
the other approaches.

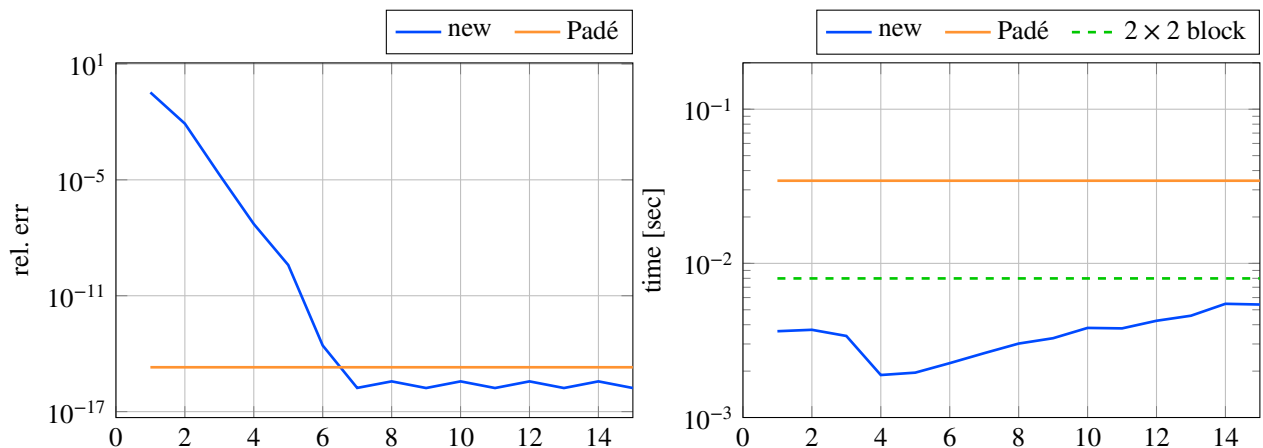
\begin{figure}
  \centering
\tikzsetnextfilename{rwmix}
\pgfplotsset{height=0.35\linewidth,width=0.47\linewidth,compat=1.10,every axis/.append style={legend style={/tikz/every even column/.append style={column sep=6pt}}}}
\pgfplotscreateplotcyclelist{list_rwmix}{%
color_peter1,line width=1pt, mark=none, solid\\
color_peter3,line width=1pt, mark=none, solid\\
color_peter2,line width=1pt, mark=none, dashed\\
}
\begin{tikzpicture}[scale=1]%
    \begin{semilogyaxis}[legend style={at={(1,1.15)}}, 
   		anchor= north east, legend columns=5,cycle list name=list_rwmix, 
   		xmin=0, xmax=15,grid=major, 
   		xlabel={}, ylabel={rel.\ err}]
		\addplot+[] 
			table[header=false, x expr=\lineno+1, y index={0}, row sep=\\]
		   {3.3416e-01\\ 8.1413e-03\\ 1.8880e-05\\ 5.1723e-08\\ 3.8938e-10\\ 
		   	2.6538e-14\\ 1.6882e-16\\ 3.6814e-16\\ 1.6677e-16\\ 3.6803e-16\\ 
		   	1.6677e-16\\ 3.6803e-16\\ 1.6677e-16\\ 3.6803e-16\\ 1.6677e-16\\ };
		\addlegendentry{new};
		\addplot+[] 
			table[header=false, x index={0}, y index={1}, row sep=\\]
		   {1 1.9796e-15\\15 1.9796e-15\\};
		\addlegendentry{Pad\'{e}};
	\end{semilogyaxis}
\end{tikzpicture}%
\begin{tikzpicture}[scale=1]%
    \begin{semilogyaxis}[legend style={at={(1,1.15)}}, 
   		anchor= north east, legend columns=5,cycle list name=list_rwmix, 
   		xmin=0, xmax=15,ymin=1e-3, ymax=2e-1,grid=major, 
   		xlabel={}, ylabel={time [sec]}]
		\addplot+[] 
			table[header=false, x expr=\lineno+1, y index={0}, row sep=\\]
		   {3.6300e-03\\ 3.7080e-03\\ 3.3820e-03\\ 1.8870e-03\\ 1.9550e-03\\ 
		   	2.2530e-03\\ 2.6150e-03\\ 3.0180e-03\\ 3.2700e-03\\ 3.8140e-03\\ 
		   	3.7870e-03\\ 4.2430e-03\\ 4.5760e-03\\ 5.4660e-03\\ 5.4080e-03\\ };
		\addlegendentry{new},
		\addplot+[] 
			table[header=false, x index={0}, y index={1}, row sep=\\]
		   {1 3.4373e-02\\15 3.4373e-02\\};		   
		\addlegendentry{Pad\'{e}};
			\addplot+[] 
			table[header=false, x index={0}, y index={1}, row sep=\\]
		   {1 7.9880e-03\\ 15 7.9880e-03\\};
		   \addlegendentry{$2\times 2$ block};
    \end{semilogyaxis}
\end{tikzpicture}
  \caption{The relative error (left) and the time in seconds (right)
    when computing a low-rank approximation to the
    \Fd\ for the \texttt{rwmix} matrix.
    The rank of the approximation is given on the $x$-axis.}
  \label{fig.rwmix}
\end{figure}
The next problem, \texttt{rwmix}, in Figure~\ref{fig.rwmix}
requires a matrix of size $62 \times 62$.
We see that only a rank 7 approximation is required to obtain full
double precision accuracy and the method is once again much faster
than the alternatives.

\begin{figure}
  \centering
\tikzsetnextfilename{wmixcooler}
\pgfplotsset{height=0.35\linewidth,width=0.47\linewidth,compat=1.10,every axis/.append style={legend style={/tikz/every even column/.append style={column sep=6pt}}}}
\pgfplotscreateplotcyclelist{list_wmixcooler}{%
color_peter1,line width=1pt, mark=none, solid\\
color_peter3,line width=1pt, mark=none, solid\\
color_peter2,line width=1pt, mark=none, dashed\\
}
\begin{tikzpicture}[scale=1]%
    \begin{semilogyaxis}[legend style={at={(1,1.15)}}, 
   		anchor= north east, legend columns=5,cycle list name=list_wmixcooler, 
   		xmin=0, xmax=15,grid=major, 
   		xlabel={}, ylabel={rel.\ err}]
		\addplot+[] 
			table[header=false, x expr=\lineno+1, y index={0}, row sep=\\]
		   {5.4313e-02\\ 6.4663e-05\\ 8.6120e-07\\ 3.8815e-10\\ 1.9320e-14\\ 
		   	1.3583e-16\\ 1.3013e-16\\ 1.2844e-16\\ 1.2586e-16\\ 1.2844e-16\\ 
		   	1.3012e-16\\ 1.2844e-16\\ 1.2586e-16\\ 1.2844e-16\\ 1.3012e-16\\ };
		\addlegendentry{new};
		\addplot+[] 
			table[header=false, x index={0}, y index={1}, row sep=\\]
		   {1 4.6168e-16\\15 4.6168e-16\\};
		\addlegendentry{Pad\'{e}};
	\end{semilogyaxis}
\end{tikzpicture}%
\begin{tikzpicture}[scale=1]%
    \begin{semilogyaxis}[legend style={at={(1,1.15)}}, 
   		anchor= north east, legend columns=5,cycle list name=list_wmixcooler, 
   		xmin=0, xmax=15,ymin=1e-3, ymax=2e-1,grid=major, 
   		xlabel={}, ylabel={time [sec]}]
		\addplot+[] 
			table[header=false, x expr=\lineno+1, y index={0}, row sep=\\]
		   {3.1530e-03\\ 2.8880e-03\\ 2.8310e-03\\ 1.5640e-03\\ 1.7920e-03\\ 
		   	2.0760e-03\\ 2.4210e-03\\ 2.6150e-03\\ 2.8530e-03\\ 3.0510e-03\\ 
		   	3.4810e-03\\ 3.7900e-03\\ 4.0110e-03\\ 4.2230e-03\\ 4.8620e-03\\ };
		\addlegendentry{new},
		\addplot+[] 
			table[header=false, x index={0}, y index={1}, row sep=\\]
		   {1 4.13e-02\\15 4.13e-02\\};		   
		\addlegendentry{Pad\'{e}};
			\addplot+[] 
			table[header=false, x index={0}, y index={1}, row sep=\\]
		   {1 5.3e-03\\ 15 5.3e-03\\};
		   \addlegendentry{$2\times 2$ block};
    \end{semilogyaxis}
\end{tikzpicture}
  \caption{The relative error (left) and the time in seconds (right)
    when computing a low-rank approximation to the
    \Fd\ for the \texttt{rwmixcool} matrix.
    The rank of the approximation is given on the $x$-axis.}
  \label{fig.wmixcool}
\end{figure}
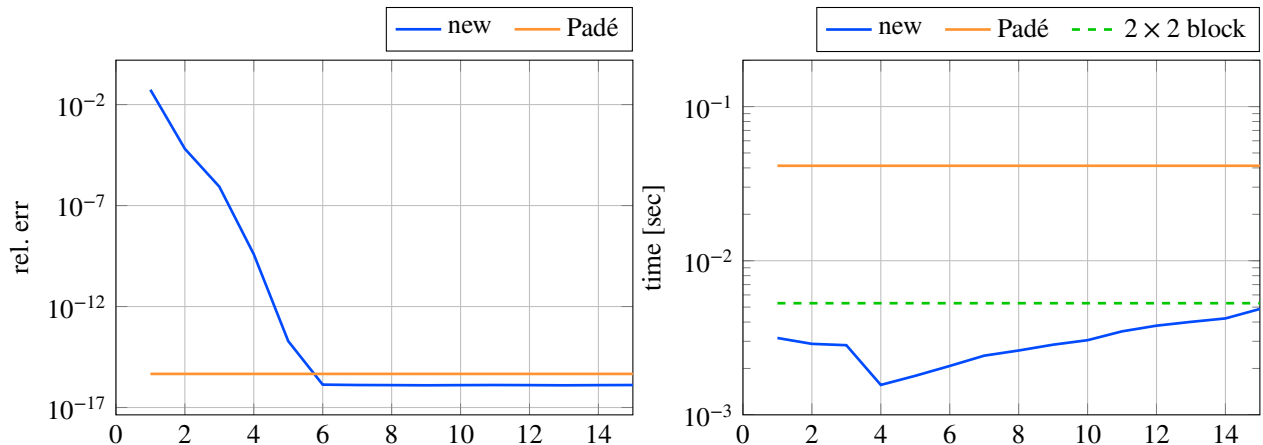

Finally the results for the \texttt{wmixcool} problem
are shown in Figure~\ref{fig.wmixcool}.
This is again a $69 \times 69$ matrix.
As before we see that full double precision accuracy is obtained
by a rank 7 approximation and our new method by
far the least time consuming.

Although we only have access to small examples here,
it is not unusual for matrices in this domain to be have
thousands of rows and columns
and to be rerun for many different time points $t$.
This growth in the matrix size and number of time points
will severely punish the time required to obtain an accurate answer
for the block method and \texttt{expm\_frechet\_pade},
since they treat $A$ as a dense matrix.
By contrast,
using a low-rank approximation will require us only to work
with dense matrices of a much smaller size and save large amounts of
time.

\section{Conclusions}\label{sec:conclusions}

We have presented different Krylov subspace methods for computing low-rank
approximations of the Fr\'{e}chet derivative $L_f(A,E)$ for rank one direction matrices $E$. 
The algorithms are applicable for various properties of $f$, $A$ and $E$: The Lanczos algorithm can be used when both $A$ and $E$ are Hermitian, 
Arnoldi and two-sided Lanczos in the general case
and block Lanczos in case only $A$ is Hermitian. 
We have given methods that are applicable when $f$ is an analytic function or a Stieltjes function,
and separately treated the case of the logarithmic function.
In addition to the standard polynomial versions of these algorithms, we
have also illustrated the use of extended and Krylov subspaces in conjunction with these methods. 
Various a priori convergence results given for all of these functions and Hermitian $A$ illustrate the converge properties of the algorithms.
For analytic $f$ and the Arnoldi approximation, we have proposed a way to carry out a posteriori error estimation and also numerically illustrated the efficiency of the estimate.
Finally, we illustrated the effectiveness of our approaches in comparison to several established methods by various numerical experiments.
We emphasize that all of our algorithms can be generalized to the case of low rank $E$ (i.e., not necessarily rank one), either by linearity of the \Fd\ or by employing block approaches.
We believe that this unified treatment of the problem will help to choose an appropriate numerical method  for approximating the 
Fr\'{e}chet derivative $L_f(A,E)$ when $A$ is sparse and $E$ has a low rank structure.

\ack 
\section*{Acknowledgments}
The authors would like to thank Daniel Kressner for inspiring and fruitful discussions on the topic.

\bibliography{matrixfunctions}

\end{document}